%
\documentclass[12pt,a4paper]{amsart}
\usepackage{amsmath}
\usepackage{mathrsfs}
\usepackage{amsxtra}
\usepackage{graphics,color,graphicx}
\usepackage{amssymb, amsfonts,amscd,verbatim,hyperref,cleveref}
\usepackage{xspace}
\usepackage{tikz}
\usetikzlibrary{matrix,calc,decorations.pathmorphing,shapes,arrows}
\textheight 8.5in
\textwidth 6in
\oddsidemargin 0.25in
\evensidemargin 0.25in
\topmargin 0in

\newtheorem{theorem}{Theorem}[section]
\newtheorem{proposition}[theorem]{Proposition}
\newtheorem{corollary}[theorem]{Corollary}
\newtheorem{lemma}[theorem]{Lemma}

\numberwithin{equation}{section}
\theoremstyle{definition}

\newtheorem{example}[theorem]{Example}


\newcommand{\pA}{\mathcal{A}}
\newcommand{\pB}{\mathcal{B}}

\newcommand{\pF}{\mathcal{F}}

\newcommand{\pH}{\mathcal{H}}

\newcommand{\pJ}{\mathcal{J}}
\newcommand{\pK}{\mathcal{K}}
\newcommand{\pL}{\mathcal{L}}
\newcommand{\pM}{\mathcal{M}}
\newcommand{\pN}{\mathcal{N}}
\newcommand{\pR}{\mathcal{R}}
\newcommand{\pS}{\mathcal{S}}

\newcommand{\eC}{\mathscr{C}}

\newcommand{\eM}{\mathscr{M}}

\newcommand{\eX}{\mathscr{X}}
\newcommand{\eY}{\mathscr{Y}}

\newcommand{\bC}{\mathbb{C}}

\newcommand{\bN}{\mathbb{N}}


\newcommand{\rad}{{\pR}}
%
\begin{document}
\title{Hyperinvariant subspaces for sets of polynomially compact operators}
\author{J.~Bra\v ci\v c}
\address{Faculty of Natural Sciences and Engineering,
University of Ljubljana,
A\v sker\v ceva cesta 12
SI-1000 Ljubljana,
Slovenija }
\email{janko.bracic@ntf.uni-lj.si}
\author{M.~Kandi\' c}
\address{Faculty of Mathematics and Physics,
University of Ljubljana,
Jadranska ulica 19,
SI-1000 Ljubljana,
Slovenija }
\email{marko.kandic@fmf.uni-lj.si}

\address{Institute of Mathematics, Physics and Mechanics,
Jadranska ulica 19,
SI-1000 Ljubljana,
Slovenija }
\email{marko.kandic@fmf.uni-lj.si}
\keywords{Polynomially compact operator; hyperinvariant subspace}
\subjclass[2020]{Primary: 47A15, 47B07 Secondary: 47L10, 47B10}
%
%
\begin{abstract}
We prove the existence of a non-trivial hyperinvariant subspace for several sets of polynomially compact operators.
The main results of the paper are:

(i) a non-trivial norm closed algebra $\pA\subseteq \pB(\eX)$ which consists of polynomially compact quasinilpotent operators has a non-trivial hyperinvariant subspace;

(ii) if there exists a non-zero compact operator in the norm closure of the algebra generated by an operator band $\pS$,
then $\pS$ has a non-trivial hyperinvariant subspace.
\end{abstract}
\maketitle
%
\section {Introduction} \label{section1}

Let $\eX$ be a complex Banach space. Denote by $\pB(\eX)$ the algebra of all bounded linear operators on $\eX$.
A closed subspace $\eM\subseteq \eX$ is said to be invariant for an operator $T\in \pB(\eX)$ if $T\eM\subseteq \eM$.
Let $\pS\subseteq \pB(\eX)$ be a non-empty set of operators. Then $\eM$ is an invariant subspace of $\pS$ if it is
invariant for every operator in $\pS$. If $\eM$ is invariant for every operator in $\pS$ and for every operator in the
commutant $\pS'=\{ T\in \pB(\eX);\; TS=ST\,\text{for every}\, S\in \pS\}$, then it is a hyperinvariant subspace of
$\pS$. Of course, the trivial subspaces $\{ 0\}$ and $\eX$ are (hyper)invariant for any set of operators. We are interested
in the existence of non-trivial invariant and hyperinvariant subspaces. The problem of existence of invariant and hyperinvariant subspaces for a given operator or a non-empty set of operators is an extensively studied topic in operator theory. The problem is solved in the finite-dimensional setting by Burnside's theorem (see \cite[Theorem 1.2.2]{RR}). In the context of infinite-dimensional Banach spaces, the problem is open for reflexive Banach spaces, in particular, for the infinite-dimensional separable Hilbert space. However, there are some Banach spaces for which we know either that every operator
has a non-trivial invariant subspace or that there exist operators without it. For instance, Argyros and Haydon
\cite{AH} have proved the existence of an infinite-dimensional Banach spaces $\eX$ such that every operator in
$\pB(\eX)$ is of the form $\lambda I+K$, where $I$ is the identity operator and $K$ is compact. It follows, by the celebrated von Neumann-Aronszajn-Smith theorem \cite{AS} and Lomonosov's theorem \cite{Lom}, that any operator in $\pB(\eX)$ has a non-trivial invariant subspace and any non-scalar operator in $\pB(\eX)$ has a non-trivial hyperinvariant subspace.
On the other hand, several examples of Banach spaces (including $\ell_1$) with operators without a non-trivial
invariant subspace are known (see \cite{Enf,Rea} for the first examples and \cite{GR} for a general approach to
Read’s type constructions of operators without non-trivial invariant closed subspaces).

With the von Neumann-Aronszajn-Smith theorem and Lomonosov's theorem in mind it is not a surprise that
suitable compactness conditions imply existence of non-trivial invariant and hyperinvariant subspaces for
different classes of operators and sets of operators. For instance, Shulman \cite[Theorem 2]{Shu} proved that an
algebra of operators whose radical contains a non-zero compact operator has a non-trivial hyperinvariant subspace.
Turovskii \cite[Corollary 5]{Tur} extended this result to semigroups of quasinilpotent operators. Another type of results
are those related to triangularizability of a set of operators. Recall that a non-empty set $\pS\subseteq \pB(\eX)$
is triangularizable if there exists a chain $\eC$ which is maximal as a chain of subspaces of $\eX$ and every subspace in $\eC$ is invariant for all operators in $\pS$. Every commutative set of compact operators is triangularizable (see \cite[Theorem 7.2.1]{RR}). Konvalinka \cite[Corollary 2.6]{Kon} has extended this result by
showing that a commuting family of polynomially compact operators is triangularizable. Another result in this direction,
obtained by the second author \cite{Kan},
says that for a norm closed subalgebra $\pA\subseteq \pB(\eX)$ of power compact operators the following assertions are equivalent: (a) $\pA$ is triangularizable; (b) the Jacobson radical $\pR(\pA)$ consists precisely of quasinilpotent
operators in $\pA$; (c) the quotient algebra $\pA/\pR(\pA)$ is commutative.

The aim of this paper is to consider the problem of existence of a non-trivial hyperinvariant subspace for
sets of polynomially compact operators. For instance, we prove (\Cref{theo03}) that a non-trivial norm closed algebra $\pA\subseteq \pB(\eX)$
which consists of polynomially compact quasinilpotent operators has a non-trivial hyperinvariant subspace.
Another result (\Cref{theo05}) which we mention here is related to operator bands, that is, to semigroups of idempotent operators. It says that an operator band $\pS$ has a non-trivial hyperinvariant subspace
if there exists a non-zero compact operator in the norm closure of the algebra generated by $\pS$.

%
\section{Preliminaries} \label{section2}
\subsection{Notation}
Let $\eX$ be a non-trivial complex Banach space. Since the results proved in this paper are either trivial or well-known
when $\eX$ is finite-dimensional we always assume that $\dim(\eX)=\infty$.
Let $\pB(\eX)$ denote the Banach algebra of all bounded linear operators on $\eX$ and let $\pK(\eX)\subseteq \pB(\eX)$ be the ideal of compact operators. The identity operator is denoted by $I$ and an operator is said to be scalar
if it is a scalar multiple of $I$.
The norm closure of a set $\pS\subseteq \pB(\eX)$ is denoted by $\overline{\pS}$.

For two operators $S_1, S_2\in \pB(\eX)$, we denote their commutator $S_1S_2-S_2S_1$ by $[S_1,S_2]$.
A non-empty set $\pS\subseteq \pB(\eX)$ is commutative if any two operators from $\pS$ commute, that is,
 $[S_1,S_2]=0$ for all $S_1,S_2\in \pS$. Similarly, $\pS$ is essentially commutative if $[S_1,S_2]\in \pK(\eX)$.
For an arbitrary
non-empty subset $\pS\subseteq \pB(\eX)$ the commutant of $\pS$ is $\pS'=\{ T\in\pB(\eX);\; [T,S]=0,\, \forall\, S\in \pS\}$. It is clear that $\pS'$ is a closed subalgebra of $\pB(\eX)$. If $\pS$ is commutative, then
$\pS\subseteq \pS'$.

\subsection{Invariant subspaces.}
A non-empty subset $\eM\subseteq \eX$ is a subspace if it is a closed linear manifold.
It is said that a subspace $\eM$ of $\eX$ is invariant for operator $T$ if $T\eM\subseteq \eM$.
An invariant subspace $\eM$ is non-trivial if $\{ 0\}\ne \pM\ne \eX$.
A non-empty set $\pS\subseteq \pB(\eX)$ is reducible if there exists a non-trivial subspace $\eM\subseteq \eX$  which is invariant for every $T\in \pS$. If there exists a chain $\eC$ which is maximal as a chain of subspaces of $\eX$ and every subspace in $\eC$ is invariant for all operators in $\pS$, then $\pS$ is said to be triangularizable.

If a subspace $\eM$ is invariant for every operator $T$ in a set $\pS$ and in its commutant $\pS'$, then $\eM$ is said to be a hyperinvariant subspace for $\pS$.

Next theorem (see Assertion in the end of \cite{Lom}) is one of the deepest results in the theory of invariant subspaces.

\begin{theorem}[Lomonosov] \label{theo06}
Every non-scalar operator which commutes with a non-zero compact operator has a non-trivial hyperinvariant subspace.
\end{theorem}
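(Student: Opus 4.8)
The plan is to recast hyperinvariance as invariance for the commutant and then run Lomonosov's fixed-point argument. Since a subspace is hyperinvariant for $T$ precisely when it is invariant for the commutant $\pA:=\{T\}'$, and $\pA$ is a norm-closed subalgebra of $\pB(\eX)$ containing both $T$ and (because it commutes with $T$) the compact operator $K$, it suffices to produce a non-trivial invariant subspace for $\pA$. I would argue by contradiction, assuming that $\pA$ is transitive, i.e. that $\overline{\pA y}=\eX$ for every $y\ne 0$ (each such closure being a non-zero closed $\pA$-invariant subspace).

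The heart of the argument is to manufacture an operator $A\in\pA$ for which $AK$ has $1$ as an eigenvalue. After rescaling $K$ and choosing a vector $x_0$, I would arrange that the closed unit ball $B=\{x:\ \|x-x_0\|\le 1\}$ avoids the origin and that $0\notin\overline{K(B)}$; the latter set is compact since $K$ is compact. Transitivity gives, for each $y\in\overline{K(B)}$, some $A\in\pA$ with $\|Ay-x_0\|<1$, so the open sets $U_A=\{z:\ \|Az-x_0\|<1\}$ cover the compact set $\overline{K(B)}$; I extract a finite subcover $U_{A_1},\dots,U_{A_n}$ and a subordinate partition of unity $a_1,\dots,a_n$. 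Then I would define
\[
 f(x)=\sum_{i=1}^{n} a_i(Kx)\,A_i K x \qquad (x\in B),
\]
which maps $B$ continuously into itself: whenever $a_i(Kx)\ne 0$ we have $A_iKx\in B$, and $B$ is convex. Because $f$ factors through $K$, its image is relatively compact, so Schauder's fixed-point theorem yields $x_*\in B$ with $f(x_*)=x_*$. Setting $A=\sum_i a_i(Kx_*)A_i\in\pA$, this reads $AKx_*=x_*$ with $x_*\ne 0$ (as $0\notin B$), so $1$ is an eigenvalue of $AK$.

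To close the argument I would exploit that $AK$ is compact. Hence $\eM=\ker(AK-I)$ is non-zero and finite-dimensional by the Riesz theory of compact operators. Since $T$ commutes with each $A_i$ and with $K$, it commutes with $AK$, whence $T\eM\subseteq\eM$; as $\eM$ is a non-zero finite-dimensional complex space, $T|_{\eM}$ has an eigenvalue $\lambda\in\bC$. Finally $\eN=\ker(T-\lambda I)$ is non-zero, is proper because $T$ is \emph{non-scalar}, and is invariant for every $S\in\pA$ since $S$ commutes with $T-\lambda I$. Thus $\eN$ is a non-trivial $\pA$-invariant (hence hyperinvariant for $T$) subspace, contradicting transitivity and completing the proof.

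I expect the main obstacle to be the fixed-point construction: verifying that $f$ is a genuine continuous self-map of $B$ with relatively compact range requires the careful normalization ensuring $0\notin\overline{K(B)}$ (so that the partition of unity is defined on all of $\overline{K(B)}$) together with the convexity of $B$. The passage from the fixed point to an honest eigenvalue of the compact operator $AK$, and the use of non-scalarity to guarantee $\eN\ne\eX$, are then comparatively routine.
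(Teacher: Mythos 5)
Your proposal is correct, and it is precisely the classical argument the paper itself points to: your Schauder fixed-point construction is exactly the proof of Lomonosov's Lemma (\Cref{lem02}) specialized to the commutant algebra $\{T\}'$ and the commuting compact $K$, and your Riesz-theory finish (finite-dimensional eigenspace of $AK$, eigenvalue of $T$ restricted to it, $\ker(T-\lambda I)$ proper by non-scalarity) is the standard derivation of the theorem from that lemma. The paper offers no proof of its own --- it cites \cite{Lom} and notes that the proof relies on \Cref{lem02} --- so your write-up simply supplies, correctly, the very route the paper indicates.
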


\noindent
The proof of \Cref{theo06} relies on the following useful lemma.

\begin{lemma}[Lomonosov's Lemma] \label{lem02}
Let $\pA\subseteq \pB(\eX)$ be an algebra. If $\pA$ does not have a non-trivial invariant subspace, then for
every non-zero compact operator $K\in \pB(\eX)$ there exist an operator $A\in \pA$ and a non-zero vector $x\in \eX$
such that $AKx=x$.
\end{lemma}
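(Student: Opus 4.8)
The plan is to derive the conclusion from Schauder's fixed point theorem applied to a cleverly chosen continuous self-map of a closed ball. First I would record the standard consequence of the hypothesis. Since $\pA$ has no non-trivial invariant subspace and $\pA\neq\{0\}$ (were $\pA$ zero, every subspace of the infinite-dimensional $\eX$ would be invariant), the common kernel $\{x\in\eX : Ax=0 \text{ for all } A\in\pA\}$ is a $\pA$-invariant subspace that must be trivial, hence equal to $\{0\}$. Consequently, for every $y\neq 0$ the closure $\overline{\pA y}$ is a non-zero invariant subspace, so $\overline{\pA y}=\eX$; that is, every orbit $\pA y$ with $y\neq 0$ is dense in $\eX$.

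Next I would fix the non-zero compact operator $K$ and choose $x_0\in\eX$ with $\|x_0\|>1$ and $\|Kx_0\|>\|K\|$; such a vector exists because $K\neq 0$ (take a large multiple of any $u$ with $Ku\neq 0$). Put $S=\{x\in\eX : \|x-x_0\|\le 1\}$, a closed, bounded, convex set with $0\notin S$. For $x\in S$ one has $\|Kx\|\ge\|Kx_0\|-\|K\|\,\|x-x_0\|\ge\|Kx_0\|-\|K\|>0$, so the compact set $\overline{KS}$ does not contain $0$. For each $y\in\overline{KS}$ the density of $\pA y$ provides $A\in\pA$ with $\|Ay-x_0\|<1$, and the corresponding open sets cover the compactum $\overline{KS}$; by compactness finitely many $A_1,\dots,A_n\in\pA$ suffice. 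I would then build a continuous partition of unity $\phi_1,\dots,\phi_n$ subordinate to this cover, for instance from $a_i(y)=\max\{0,\,1-\|A_iy-x_0\|\}$ and $\phi_i=a_i/\sum_j a_j$, with the key feature that $\phi_i(y)>0$ forces $\|A_iy-x_0\|<1$, hence $A_iy\in S$.

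Finally I would define $f\colon S\to S$ by $f(x)=\sum_{i=1}^n\phi_i(Kx)\,A_iKx$. This is well defined since $Kx\in\overline{KS}$ for $x\in S$; for each $x$ the value $f(x)$ is a convex combination of the points $A_iKx$ carrying positive weight, all of which lie in $S$, so $f(S)\subseteq S$ by convexity. Moreover $f(S)$ is contained in the closed convex hull of $\bigcup_i A_i(\overline{KS})$, a compact set, so $\overline{f(S)}$ is compact and Schauder's theorem yields a fixed point $x_\ast\in S$. Setting $A=\sum_i\phi_i(Kx_\ast)A_i\in\pA$ then gives $AKx_\ast=f(x_\ast)=x_\ast$, and $x_\ast\neq 0$ because $0\notin S$, which is exactly the assertion.

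I expect the main obstacle to be the infinite-dimensional fixed-point step: the ball $S$ is not compact, so Brouwer's theorem is unavailable, and the argument succeeds only because $f$ factors through the compact operator $K$, forcing its image into a relatively compact set so that Schauder's theorem applies. The delicate points are arranging the normalization of $x_0$ so that both $0\notin S$ and $0\notin\overline{KS}$, and checking that the partition of unity keeps $f$ valued in $S$; everything else is routine continuity and convexity bookkeeping.
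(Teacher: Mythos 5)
Your proof is correct in all details, including the delicate normalization making $0\notin S$ and $0\notin\overline{KS}$, the partition-of-unity construction, and the appeal to Schauder via the relative compactness of $f(S)$ (Mazur's theorem for the closed convex hull). The paper itself states \Cref{lem02} without proof, citing Lomonosov's original article \cite{Lom}, and your argument is precisely that classical fixed-point proof, so there is nothing to reconcile between the two.
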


\subsection{Spectral radius.}
The spectrum of an operator $T\in \pB(\eX)$ is denoted by $\sigma(T)$ and the spectral radius of $T$ is
$\rho(T)=\max\{ |z|;\; z\in \sigma(T)\}$. By the spectral radius formula (Gelfand's formula),
$\rho(T)=\lim\limits_{n\to \infty} \| T\|^{\frac{1}{n}}$.
An operator $T\in \pB(\eX)$ is quasinilpotent if $\rho(T)=0$. A compact quasinilpotent operator is called a Volterra operator.

Let $\pF$ be a non-empty set of operators in $\pB(\eX)$. For each $n\in \bN$, let $\pF^{(n)}=\{T_1\cdots T_n;\; T_1,\ldots, T_n\in \pF\}$. By $\|\pF\|=\sup\{\|T\|;\; T\in \pF\}$ we denote the joint norm of $\pF$ and by $\rho(\pF)$ we denote the joint spectral radius of $\pF$ defined as
$$\rho(\pF)=\limsup_{n\to\infty} \|\pF^{(n)}\|^{\frac 1n}.$$

A subalgebra $\pA\subseteq \pB(\eX)$ is said to be finitely quasinilpotent if $\rho(\pF)=0$ for every finite subset $\pF$ of $\pA$. By \cite[Theorem 1]{Shu}, every subalgebra $\pA\subseteq \pB(\eX)$  of Volterra operators is finitely
quasinilpotent.

\subsection{Polynomially compact operators.}
An operator $T\in \pB(\eX)$ is polynomially compact if there exists a non-zero complex polynomial $p$ such that $p(T)$ is a compact operator. In particular, algebraic operators (nilpotents, idempotents etc.) are polynomially compact.
Hence, $T$ is polynomially compact if and only if $\pi(T)$, where
$\pi\colon \pB(\eX)\to \pB(\eX)/\pK(\eX)$ is the quotient projection, is an algebraic element in the Calkin algebra
$\pB(\eX)/\pK(\eX)$. If $T^n$ is compact for some $n\in \bN$, then $T$ is said to be power compact. For a polynomially compact operator $T$, there exists a unique monic polynomial $m_T$ of the smallest degree such that $m_T(T)$ is a compact operator. The polynomial $m_T$ is called the minimal polynomial of $T$.
The following is the structure theorem for polynomially compact operators proved by Gilfeather \cite[Theorem 1]{Gil}.

\begin{theorem} \label{theo02}
Let $T\in \pB(\eX)$ be a polynomially compact operator with minimal polynomial $m_T(z)=(z-\lambda_1)^{n_1}\cdots
(z-\lambda_k)^{n_k}$. Then there exist invariant subspaces $\eX_1,\ldots, \eX_k$ for $T$ such that $\eX=\eX_1 \oplus \cdots \oplus \eX_k$ and $T=T_1\oplus \cdots \oplus T_k$, where $T_i$ is the restriction of $T$ to $\eX_i$. The operators
$(T_j-\lambda_j I_j)^{n_j}$ are all compact.

The spectrum of $T$ consists of countably many points with $\{ \lambda_1,\ldots,\lambda_k\}$ as the only possible
limit points and such that all but possibly $\{ \lambda_1,\ldots,\lambda_k\}$ are eigenvalues with finite dimensional
generalized eigenspaces. Each point $\lambda_j$ $(j=1, \ldots,k)$ is either the limit of eigenvalues of $T$ or else
$\eX_j$ is infinite dimensional and $T_j-\lambda_j I_j$ is a quasinilpotent operator on $\eX_j$.
\end{theorem}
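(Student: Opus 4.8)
The plan is to pin down the spectral data of $T$ by passing to the Calkin algebra and then to split $\eX$ by the holomorphic (Riesz) functional calculus. First I would use the quotient $\pi\colon \pB(\eX)\to\pB(\eX)/\pK(\eX)$. Since $m_T(T)$ is compact, $\pi(m_T(T))=m_T(\pi(T))=0$, so $\pi(T)$ is an algebraic element of the Calkin algebra, and minimality of $m_T$ forces it to be exactly the minimal polynomial of $\pi(T)$. Consequently the essential spectrum $\sigma(\pi(T))$ equals the zero set $\{\lambda_1,\dots,\lambda_k\}$, and each $\lambda_j$ genuinely lies in it. This already identifies the ``only possible limit points'' appearing in the statement.

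Next I would describe $\sigma(T)$ itself by Fredholm theory. For $z\notin\{\lambda_1,\dots,\lambda_k\}$ I would write $m_T(w)-m_T(z)=(w-z)h_z(w)$ for a polynomial $h_z$; substituting $T$ gives $(T-zI)h_z(T)=h_z(T)(T-zI)=m_T(T)-m_T(z)I$, whose right-hand side is $-m_T(z)I$ plus a compact operator, with $m_T(z)\neq 0$. Hence $T-zI$ is Fredholm on all of $\bC\setminus\{\lambda_1,\dots,\lambda_k\}$. Because this set is connected and $T-zI$ is invertible for $|z|$ large, the index is identically $0$ there, and the analytic Fredholm theorem shows that $\sigma(T)\setminus\{\lambda_1,\dots,\lambda_k\}$ is a discrete set of eigenvalues of finite algebraic multiplicity accumulating only at the $\lambda_j$. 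This yields the stated description of the spectrum together with the finite-dimensionality of the generalized eigenspaces.

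I would then produce the decomposition. Choosing pairwise disjoint clopen subsets $\sigma_1,\dots,\sigma_k$ of $\sigma(T)$ that partition it with $\lambda_j\in\sigma_j$ (assigning each stray isolated eigenvalue to a nearest $\lambda_j$, so that $\sigma_j$ has $\lambda_j$ as its only possible limit point), the associated Riesz idempotents $E_j=\tfrac{1}{2\pi i}\int_{\Gamma_j}(zI-T)^{-1}\,dz$ commute with $T$ and with every operator commuting with $T$, and satisfy $E_iE_j=\delta_{ij}E_j$ and $\sum_j E_j=I$. Setting $\eX_j=E_j\eX$ gives $\eX=\eX_1\oplus\cdots\oplus\eX_k$ and $T=T_1\oplus\cdots\oplus T_k$ with $\sigma(T_j)=\sigma_j$. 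For the compactness claim I would restrict the compact operator $m_T(T)=\prod_i(T-\lambda_i I)^{n_i}$ to the invariant subspace $\eX_j$: its restriction is again compact, and for $i\neq j$ we have $\lambda_i\notin\sigma_j=\sigma(T_j)$, so each factor $T_j-\lambda_i I_j$ is invertible on $\eX_j$. Thus $(T_j-\lambda_j I_j)^{n_j}=\bigl(\prod_{i\neq j}(T_j-\lambda_i I_j)^{n_i}\bigr)^{-1}\,m_T(T)|_{\eX_j}$ is compact.

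Finally, the dichotomy. Since $\lambda_j$ lies in the essential spectrum of $T_j$, the Riesz projection at $\lambda_j$ cannot have finite rank, so $\eX_j$ is infinite-dimensional. The operator $S_j=T_j-\lambda_j I_j$ is power compact (indeed $S_j^{n_j}$ is compact), so by Riesz--Schauder theory $\sigma(S_j)\setminus\{0\}$ consists of eigenvalues of finite multiplicity accumulating only at $0$. I would split into two cases: if $0$ is a limit point of $\sigma(S_j)$, then $\lambda_j$ is a limit of eigenvalues of $T$; otherwise $0$ is isolated in $\sigma(S_j)$, and the Riesz subspace attached to $\lambda_j$ is the infinite-dimensional subspace on which $T-\lambda_j I$ has spectrum $\{0\}$, i.e.\ is quasinilpotent. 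The hard part will be exactly this last step: keeping the spectral bookkeeping consistent with the chosen decomposition and arguing that an \emph{isolated essential} spectral point forces an infinite-dimensional spectral subspace carrying a quasinilpotent restriction. It is precisely here that one must use that $\lambda_j$ belongs to the essential spectrum rather than being an ordinary isolated eigenvalue of finite type.
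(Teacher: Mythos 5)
You should first know that the paper contains no proof of this statement at all: it is quoted from Gilfeather \cite[Theorem 1]{Gil} as background, so your attempt can only be measured against the standard argument, which your first three paragraphs reproduce correctly. The identification of $m_T$ with the minimal polynomial of $\pi(T)$ and hence $\sigma(\pi(T))=\{\lambda_1,\dots,\lambda_k\}$; the factorization $m_T(w)-m_T(z)=(w-z)h_z(w)$ showing $T-zI$ is Fredholm of index $0$ on the connected set $\bC\setminus\{\lambda_1,\dots,\lambda_k\}$; the resulting discreteness and finite algebraic multiplicity of the remaining spectrum; the clopen partition with its Riesz idempotents; and the compactness of $(T_j-\lambda_j I_j)^{n_j}$, obtained by inverting the commuting factors $T_j-\lambda_i I_j$ ($i\neq j$) against the compact restriction $m_T(T)|_{\eX_j}$ --- all of this is sound, and the small unstated steps (e.g.\ that $\lambda_j\in\sigma(\pi(T_j))$ because $\lambda_j\notin\sigma(T_i)$ for $i\neq j$, whence $\eX_j$ is infinite-dimensional) are easily filled.

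The difficulty you flag at the end is real, and it is worse than bookkeeping. With your ``nearest $\lambda_j$'' partition, $\sigma_j$ may contain finitely many stray eigenvalues of finite type; in your second case ($0$ isolated in $\sigma(S_j)$) the subspace on which $T-\lambda_j I$ is quasinilpotent is the Riesz subspace of $\eX_j$ attached to $\{\lambda_j\}$ alone, a proper complemented subspace of $\eX_j$, so the stated conclusion ``$T_j-\lambda_j I_j$ quasinilpotent on $\eX_j$'' does not follow. The natural repair --- shrink $\sigma_j$ to $\{\lambda_j\}$ and reassign the strays --- works only when some other $\lambda_i$ is a limit of eigenvalues, since for such a block the first alternative holds regardless of what $\sigma_i$ absorbs. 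When no $\lambda_i$ is a limit of eigenvalues and strays exist, no assignment works, and the dichotomy as literally stated actually fails: take $\eX=\bC\oplus\ell^2$ and $T=5\oplus Q$, where $Q$ is the weighted forward shift with weights $w_{2k-1}=1$, $w_{2k}=\tfrac{1}{k}$, so that $Q$ is quasinilpotent, noncompact, has no eigenvalues, and $Q^2$ is compact. Then $m_T(z)=z^2$, so $k=1$ and the decomposition is forced to be $\eX_1=\eX$; the eigenvalue set of $T$ is $\{5\}$, so $\lambda_1=0$ is not a limit of eigenvalues, yet $T$ is not quasinilpotent. Thus your instinct to attach quasinilpotency to the refined Riesz subspace (which differs from $\eX_j$ by the finite-dimensional summand carrying the stray eigenvalues) is the correct provable statement, and no amount of spectral bookkeeping will yield the clause for the $\eX_j$ of the first paragraph; the quoted formulation should be read modulo exactly this finite-dimensional discrepancy.
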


\begin{corollary} \label{cor01}
A quasinilpotent operator $T\in \pB(\eX)$ is polynomially compact if and only if it is power compact.
\end{corollary}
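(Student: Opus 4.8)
The forward implication is immediate and requires no machinery: if $T^n$ is compact for some $n\in\bN$, then the polynomial $p(z)=z^n$ already witnesses that $T$ is polynomially compact. The content of the statement is the converse, and the plan is to read it off from the structure theorem. So I would assume $T$ is quasinilpotent and polynomially compact, write its minimal polynomial as $m_T(z)=(z-\lambda_1)^{n_1}\cdots(z-\lambda_k)^{n_k}$, and invoke \Cref{theo02} to obtain a decomposition $\eX=\eX_1\oplus\cdots\oplus\eX_k$, $T=T_1\oplus\cdots\oplus T_k$, with each $(T_j-\lambda_j I_j)^{n_j}$ compact. Since the spectrum of a (topological) direct sum is the union of the spectra of its summands, $\sigma(T)=\bigcup_{j}\sigma(T_j)$, and the hypothesis $\rho(T)=0$ forces $\sigma(T_j)=\{0\}$ for every $j$ with $\eX_j\ne\{0\}$.

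The crux of the argument is to determine the scalars $\lambda_j$ attached to the infinite-dimensional blocks, and this is the step I expect to carry the whole proof. Fix $j$ with $\dim(\eX_j)=\infty$. Applying the spectral mapping theorem to the polynomial $p(z)=(z-\lambda_j)^{n_j}$ and to $\sigma(T_j)=\{0\}$ gives $\sigma\bigl((T_j-\lambda_j I_j)^{n_j}\bigr)=\{(-\lambda_j)^{n_j}\}$. On the other hand, $(T_j-\lambda_j I_j)^{n_j}$ is a compact operator acting on an infinite-dimensional Banach space, so $0$ must belong to its spectrum (a compact operator whose spectrum avoids $0$ is invertible, which is impossible in infinite dimensions). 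Comparing the two descriptions yields $(-\lambda_j)^{n_j}=0$, hence $\lambda_j=0$, and therefore $T_j^{n_j}=(T_j-\lambda_j I_j)^{n_j}$ is compact, i.e. $T_j$ is power compact. For a block with $\dim(\eX_j)<\infty$ there is nothing to prove, since every operator on a finite-dimensional space is compact.

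It then remains to pass from block-wise power compactness to a single uniform exponent. Setting $N=\max\{n_1,\ldots,n_k\}$, each $T_j^N$ is compact: for an infinite-dimensional block because $T_j^N=T_j^{\,N-n_j}\,T_j^{n_j}$ is a bounded operator composed with the compact operator $T_j^{n_j}$, and for a finite-dimensional block automatically. Consequently $T^N=T_1^N\oplus\cdots\oplus T_k^N$ is a finite direct sum of compact operators, hence compact, so $T$ is power compact. The only genuinely delicate point in this plan is the spectral computation forcing $\lambda_j=0$ on each infinite-dimensional summand; once that is in place, the rest is bookkeeping built on the decomposition supplied by \Cref{theo02}.
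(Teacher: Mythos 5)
Your proof is correct and takes essentially the paper's route: both arguments rest on Gilfeather's structure theorem (\Cref{theo02}) and use quasinilpotence to force every root of the minimal polynomial to vanish. The paper compresses this into one line---since $\sigma(T)=\{0\}$, \Cref{theo02} yields $m_T(z)=z^n$, so $T^n=m_T(T)$ is compact---whereas you re-derive $\lambda_j=0$ block by block via the spectral mapping theorem and the non-invertibility of compact operators on infinite-dimensional spaces, which is the same idea carried out with more bookkeeping, and every step of it checks out.
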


\begin{proof}
It is clear that every power compact operator is polynomially compact. On the other hand, if $T$ is a polynomially compact and quasinilpotent, then $\sigma(T)=\{0\}$ and therefore $m_T(z)=z^n$ for a positive integer $n$, by
\Cref{theo02}, that is, $T$ is power compact.
\end{proof}

\subsection{Algebras and ideals.}
For a non-empty set of operators $\pS\subseteq \pB(\eX)$, let $\pA(\pS)$ be the subalgebra of $\pB(\eX)$ generated by $\pS$ and let $\pA_1(\pS)$ be the subalgebra of $\pB(\eX)$ generated by $\pS$ and $I$.
By $\pH(\pS)$ we denote the algebra which is generated by $\pS$ and its commutant $\pS'$. We will call it
the {\em hyperalgebra} of $\pS$. If $\pS$ is a semigroup, then an operator
$T\in \pB(\eX)$ is in $\pH(\pS)$ if and only if there exist $n\in \bN$ and operators $S_1, \ldots, S_n\in \pS$ and
$T_0, T_1, \ldots, T_n\in \pS'$ such that
$$T=T_0+S_1 T_1+\cdots+ S_n T_n=T_0+T_1 S_1+\cdots+ T_n S_n .$$
Here we used the fact that $I\in \pS'$.
Since $\pS\subseteq \pH(\pS)$ and $\pA(\pS)$ is the smallest algebra which contains $\pS$ we have
$\pA(\pS)\subseteq \pH(\pS)$. On the other hand, it is obvious that $\pS'=\pA(\pS)'$ and therefore
$\pA(\pS)'\subseteq \pH(\pS)$. We conclude that the hyperalgebra of $\pS$ is generated by $\pA(\pS)$ and
$\pA(\pS)'$, that is, $\pH(\pS)=\pH\bigl(\pA(\pS)\bigr)$.

If $\pM$ and $\pN$ are non-empty subsets of $\pB(\eX)$, then let
$\pM+\pN=\{ M+N;\; M\in \pM, N\in \pN\}$ and let
$\pM\pN$ be the set of all finite
sums $M_1 N_1+\cdots+M_k N_k$, where $M_i\in \pM$ and $N_i\in \pN$ for each $i=1, \ldots, k$.
Hence, if $\pA$ is a subalgebra of $\pB(\eX)$, then its hyperalgebra is
$\pH(\pA)=\pA'+\pA\pA'=\pA'+\pA'\pA$.

If $\pJ$ is an ideal in an algebra $\pA\subseteq \pB(\eX)$, then we will denote by $\pJ_{_{\pH}}$ the ideal in $\pH(\pA)$ generated by $\pJ$.

\begin{lemma} \label{lem01}
Let $\pA\subseteq \pB(\eX)$ be an algebra. If $\pJ\triangleleft\pA$, then $\pJ_{_{\pH}}=\pA'\pJ=\pJ\pA'$.
\end{lemma}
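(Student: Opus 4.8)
The plan is to pin down $\pJ_{_{\pH}}$ by showing that $\pA'\pJ$ is a two-sided ideal of $\pH(\pA)$ containing $\pJ$ and that it sits inside every such ideal; by minimality this identifies it with the generated ideal. The equality $\pA'\pJ=\pJ\pA'$ comes essentially for free, so I would dispose of it first: since $\pJ\triangleleft\pA$ we have $\pJ\subseteq\pA$, and every $T\in\pA'$ commutes with every element of $\pA$, hence $TJ=JT$ for all $J\in\pJ$. Thus the elementary products agree termwise and the two sets of finite sums $\pA'\pJ$ and $\pJ\pA'$ coincide. It is also immediate that $\pJ\subseteq\pA'\pJ$, because $I\in\pA'$ gives $J=IJ$ for every $J\in\pJ$, and that $\pA'\pJ$ is a linear subspace (finite sums and scalar multiples of products $TJ$ stay of the same form).

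The core of the argument is to check that $\pA'\pJ$ is stable under left and right multiplication by a general element $h=T_0+\sum_i S_iT_i$ of $\pH(\pA)=\pA'+\pA\pA'$, where $T_0,T_i\in\pA'$ and $S_i\in\pA$. I take a typical $a=\sum_j U_jJ_j\in\pA'\pJ$ with $U_j\in\pA'$, $J_j\in\pJ$. The $T_0$-part is harmless because $\pA'$ is an algebra: $T_0a=\sum_j (T_0U_j)J_j\in\pA'\pJ$, and on the right $U_jJ_jT_0=(U_jT_0)J_j\in\pA'\pJ$ after commuting $T_0$ past $J_j$. For the $S_iT_i$-part I would use the commutation and the ideal property in tandem. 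On the left, $S_iT_iU_jJ_j=S_i(T_iU_j)J_j=S_iJ_j(T_iU_j)=(S_iJ_j)(T_iU_j)$, first absorbing $T_iU_j\in\pA'$ and commuting it past $J_j\in\pA$, then using $S_iJ_j\in\pJ$; the result lies in $\pJ\pA'=\pA'\pJ$. On the right, $U_jJ_jS_iT_i=U_j(J_jS_i)T_i=(J_jS_i)(U_jT_i)$, using $J_jS_i\in\pJ$ and commuting $U_j\in\pA'$ past it, which again lands in $\pJ\pA'$. Summing over $i,j$ shows $h\cdot a,\,a\cdot h\in\pA'\pJ$, so $\pA'\pJ\triangleleft\pH(\pA)$.

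Finally I would close the circle by minimality. Since $\pA'\subseteq\pH(\pA)$ and $\pJ\subseteq\pJ_{_{\pH}}$, the ideal property of $\pJ_{_{\pH}}$ yields $\pA'\pJ\subseteq\pH(\pA)\,\pJ_{_{\pH}}\subseteq\pJ_{_{\pH}}$. Conversely, $\pA'\pJ$ is an ideal of $\pH(\pA)$ containing $\pJ$, and $\pJ_{_{\pH}}$ is by definition the smallest such ideal, so $\pJ_{_{\pH}}\subseteq\pA'\pJ$; hence $\pJ_{_{\pH}}=\pA'\pJ=\pJ\pA'$.

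The one place demanding care is the second paragraph, where factors from $\pA'$ and from $\pJ$ must be shuffled into the correct positions by combining commutativity of $\pA'$ with $\pA$ and the ideal property of $\pJ$ in $\pA$; this is the only step where the hypothesis is genuinely used, and everything else is bookkeeping built on $I\in\pA'$ and the definition of $\pH(\pA)$.
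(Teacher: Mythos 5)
Your proposal is correct and is in substance the paper's own argument: the paper expands the standard description $\pJ_{_{\pH}}=\pJ+\pH(\pA)\pJ+\pJ\pH(\pA)+\pH(\pA)\pJ\pH(\pA)$ using $\pH(\pA)=\pA'+\pA'\pA$ and collapses everything into $\pA'\pJ$ via exactly the two facts you use, namely that $\pA'$ commutes with $\pJ\subseteq\pA$ and that $\pJ$ absorbs multiplication by $\pA$, together with $I\in\pA'$. Your reorganization --- verifying directly that $\pA'\pJ=\pJ\pA'$ is a two-sided ideal of $\pH(\pA)$ containing $\pJ$ and then invoking minimality of the generated ideal --- is an equivalent packaging of the same computation, not a different route.
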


\begin{proof}
From equalities $\pJ_{_{\pH}}=\pJ+\pH(\pA)\pJ+\pJ\pH(\pA)+\pH(\pA)\pJ\pH(\pA)$ and $\pH(\pA)=\pA'+\pA'\pA$ we  conclude
\begin{align*}
\pJ_{_{\pH}}&=\pJ+\pH(\pA)\pJ+\pJ\pH(\pA)+\pH(\pA)\pJ\pH(\pA)\\
&=\pJ+\mathcal (\pA'+\pA'\pA)\pJ+\pJ\mathcal (\pA'+\pA'\pA)+\mathcal (\pA'+\pA'\pA)\pJ\mathcal (\pA'+\pA'\pA)\\
&\subseteq \pJ+\pA'\pJ \subseteq \pA' \pJ
\end{align*}
as $\pA'$ contains the identity operator.
On the other hand, since we also have $\pA'\pJ\subseteq \pJ_{_{\pH}}$, we obtain the equality
$\pJ_{_{\pH}}=\pA'\pJ$.
\end{proof}

An algebra $\pA$ over an arbitrary field is said to be a nil-algebra if every element of $\pA$ is nilpotent.
A nil-algebra $\pA$ is of bounded nil-index if there exists a positive integer $n$ such that $x^n=0$ for each $x\in \pA$.
If there exists $n\in\bN$ such that $a_1\cdots a_n=0$ for all $a_1,\ldots,a_n\in \pA$, then $\pA$ is said to be a nilpotent algebra. By the celebrated Nagata-Higman theorem (see \cite{Nag} and \cite{Hig}), every nil-algebra of bounded nil-index is nilpotent.

\begin{lemma} \label{lem03}
Let $\pA\subseteq \pB(\eX)$ be an algebra. An ideal $\pJ\triangleleft \pA$ is nilpotent if and only if $\pJ_{_{\pH}}\triangleleft \pH(\pA)$ is nilpotent. The nilpotency indices of $\pJ$ and $\pJ_{_{\pH}}$ are equal.
\end{lemma}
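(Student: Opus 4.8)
The plan is to exploit \Cref{lem01}, which gives the explicit description $\pJ_{_{\pH}}=\pA'\pJ=\pJ\pA'$ of the ideal generated by $\pJ$ in the hyperalgebra. The essential structural fact I would use is that $\pA'$ commutes with everything in $\pA$, and in particular with everything in $\pJ$, so that elements of $\pA'$ can be freely moved through products of elements of $\pJ$. This commutation is what allows the nilpotency index to be preserved exactly rather than merely bounded.

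For the direction that is essentially trivial, if $\pJ_{_{\pH}}$ is nilpotent of index $n$, then since $\pJ\subseteq \pJ_{_{\pH}}$ (because $I\in\pA'$), every product of $n$ elements of $\pJ$ vanishes, so $\pJ$ is nilpotent of index at most $n$. The substance is the converse together with the equality of indices. So suppose $\pJ$ is nilpotent with index exactly $n$, meaning $a_1\cdots a_n=0$ for all $a_i\in\pJ$ but some product of $n-1$ elements of $\pJ$ is non-zero. By \Cref{lem01} a general element of $\pJ_{_{\pH}}$ has the form $T a$ with $T\in\pA'$ and $a\in\pJ$. First I would take an arbitrary product of $n$ elements of $\pJ_{_{\pH}}$, write it as $(T_1 a_1)(T_2 a_2)\cdots(T_n a_n)$ with $T_i\in\pA'$ and $a_i\in\pJ$, and then push all the $T_i$ to the left using the fact that each $T_i$ commutes with each $a_j$. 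This rewrites the product as $T_1 T_2\cdots T_n\,a_1 a_2\cdots a_n$, and since $a_1\cdots a_n=0$ by the nilpotency of $\pJ$, the whole product is zero. Hence $\pJ_{_{\pH}}$ is nilpotent of index at most $n$.

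Finally, to see the index is exactly $n$ and not smaller, I would note that $\pJ\subseteq\pJ_{_{\pH}}$, so a non-zero product of $n-1$ elements of $\pJ$ is also a non-zero product of $n-1$ elements of $\pJ_{_{\pH}}$; thus the nilpotency index of $\pJ_{_{\pH}}$ is at least $n$. Combining the two inequalities gives that both indices equal $n$. A small technical point worth spelling out is that an arbitrary element of $\pJ_{_{\pH}}$ is, by \Cref{lem01}, a finite \emph{sum} $\sum_j T_j a_j$; when multiplying $n$ such sums one obtains a sum of products of the above monomial type, and by distributivity each monomial term vanishes by the argument just given, so the whole sum vanishes as well.

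I do not expect a serious obstacle here: the only thing that must be handled with care is the bookkeeping when passing from the monomial products $T a$ to the general sums, and making sure the commutation $[T_i,a_j]=0$ is invoked correctly (it holds because $T_i\in\pA'$ and $a_j\in\pJ\subseteq\pA$). The clean factorization $\pJ_{_{\pH}}=\pA'\pJ$ from \Cref{lem01} is doing all the real work, and the commutativity of $\pA'$ with $\pA$ is what upgrades a bound on the index to an exact equality.
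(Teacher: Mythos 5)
Your proof is correct and takes essentially the same approach as the paper: both arguments rest on the factorization $\pJ_{_{\pH}}=\pA'\pJ$ from \Cref{lem01} and the fact that $\pA'$ commutes with $\pJ$, which the paper packages as the set identity $(\pA'\pJ)^n=\pA'\pJ^n$ (proved by induction) while you verify the same thing monomial-by-monomial and then invoke distributivity. Your explicit treatment of finite sums and of the exact equality of the two nilpotency indices merely spells out what the paper's shorter computation leaves implicit.
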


\begin{proof}
Since $\pA'$ and $\pJ$ commute, an easy induction shows that for each $n\in \bN$ we have
$(\pA'\pJ)^n=\pA' \pJ^n$. Hence, if $\pJ^n=\{0\}$, then $(\pA'\pJ)^n=\{0\}$, as well. If $(\pA'\pJ)^n=\{0\}$, then $\pJ^n \subseteq \pA'\pJ^n=(\pA'\pJ)^n=\{0\}$ yields that $\pJ^n=\{0\}$.
\end{proof}

If $\pJ$ is a nil-ideal of bounded nil-index, then $\pJ$ is nilpotent by the Nagata-Higman theorem. This immediately implies that
$\pJ_{_{\pH}}$ is nilpotent.
In particular, if $\pA$ is a nilpotent algebra, then $\pA\pA'$ is a nilpotent ideal in the hyperalgebra $\pH(\pA)$.

\section{Hyperinvariant subspaces of algebras of polynomially compact operators} \label{section3}

The simplest polynomially compact operators which are not necessary compact are algebraic operators,
in particular nilpotent operators.
Hadwin et al. \cite[Corollary 4.2]{HNRRR} proved that a norm closed algebra of nilpotent operators on the separable
infinite-dimensional complex Hilbert space is triangularizable. The following proposition shows that a norm closed algebra of nilpotent operators on an arbitrary complex Banach space has a non-trivial hyperinvariant subspace.

\begin{proposition} \label{prop01}
If a norm closed subalgebra $\pA\subseteq \pB(\eX)$ consists of nilpotent operators, then its hyperalgebra $\pH(\pA)$
is reducible.
\end{proposition}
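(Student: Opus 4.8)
The plan is to reduce the statement to the case of a \emph{nilpotent} algebra and then to exhibit an explicit invariant subspace for the hyperalgebra. We may clearly assume $\pA\neq\{0\}$. By hypothesis $\pA$ is a nil-algebra (every element is nilpotent), so the first and main step is to upgrade this pointwise condition to a \emph{uniform} bound on the nil-index, after which the Nagata--Higman theorem will force $\pA$ to be nilpotent. I expect this promotion from ``nil'' to ``bounded nil-index'' to be the only genuinely delicate point; everything afterwards is routine.

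To obtain the uniform bound I would run a Baire category argument inside the Banach algebra $\pA$. For each $n\in\bN$ the set $\{A\in\pA : A^n=0\}$ is closed, since $A\mapsto A^n$ is continuous, and because every element of $\pA$ is nilpotent these sets cover $\pA$. As $\pA$ is complete, some $\{A\in\pA : A^{n_0}=0\}$ must have non-empty interior, say it contains a ball $B(A_0,r)$ with $r>0$. Then for an arbitrary $B\in\pA$ and every $t\in\bC$ with $|t|\,\|B\|<r$ we have $(A_0+tB)^{n_0}=0$. The left-hand side is a polynomial in $t$ of degree at most $n_0$ with coefficients in $\pB(\eX)$, and it vanishes on a neighbourhood of the origin, hence identically; comparing the coefficients of $t^{n_0}$ yields $B^{n_0}=0$. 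Thus $A^{n_0}=0$ for every $A\in\pA$, so $\pA$ is a nil-algebra of bounded nil-index, and by the Nagata--Higman theorem (see \cite{Nag,Hig}) it is a nilpotent algebra.

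It remains to produce the subspace. Let $N$ be minimal with $\pA^N=\{0\}$; since $\pA\neq\{0\}$ we have $N\ge 2$ and $\pA^{N-1}\neq\{0\}$. Consider the closed subspace $\eM=\bigcap_{A\in\pA}\ker A$. It is invariant for $\pA$ trivially, and for $\pA'$ as well, since $A(Bx)=B(Ax)=0$ whenever $B\in\pA'$ and $x\in\eM$; hence $\eM$ is invariant for the whole hyperalgebra $\pH(\pA)$. Finally $\eM$ is non-trivial: from $\pA\cdot\pA^{N-1}\eX\subseteq\pA^{N}\eX=\{0\}$ we get $\pA^{N-1}\eX\subseteq\eM$, so $\pA^{N-1}\neq\{0\}$ forces $\eM\neq\{0\}$, while $\pA\neq\{0\}$ forces $\eM\neq\eX$. (Equivalently, one may invoke \Cref{lem03}: the nilpotent ideal $\pA\triangleleft\pA$ yields the nilpotent ideal $\pA\pA'\triangleleft\pH(\pA)$, and a non-zero nilpotent ideal is always reducible.) This shows that $\pH(\pA)$ is reducible.
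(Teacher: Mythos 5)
Your proof is correct and takes essentially the same route as the paper: the paper gets nilpotency of $\pA$ by citing Grabiner's theorem, whose proof is precisely your Baire category plus Nagata--Higman argument, and then produces a hyperinvariant kernel, choosing $\ker(A_0)$ for a longest non-zero product $A_0=A_1\cdots A_{n_0-1}$ (which annihilates $\pA$ on both sides, so $\pH(\pA)\subseteq (A_0)'$) where you instead take the joint kernel $\bigcap_{A\in\pA}\ker A$. The two kernels are hyperinvariant for the same commutation reasons, and since your final step, like the paper's, uses only nilpotency of the algebra and not closedness, it equally yields \Cref{cor02}.
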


\begin{proof}
By Grabiner's Theorem \cite{Grab},  $\pA$ is a nilpotent algebra, that is,  there exists $n\in \bN$ such that an arbitrary product of at least $n$ operators from $\pA$ is the zero operator. Let $n_0$ be the smallest positive integer with this property. Since $\pA\ne \{ 0\}$ we have $n_0>1.$ There exist operators $A_1,\ldots,A_{n_0-1}\in \pA$ such that
$A_0:=A_1,\ldots,A_{n_0-1}\ne 0$. Note that $A_0T=TA_0=0$ for every operator $T\in \pA$, that is, $\pA\subseteq
(A_0)'$, where $(A_0)'$ is the commutant of $A_0$. It is clear that $\pA'\subseteq (A_0)'$. Hence, $\pH(\pA)\subseteq (A_0)'$.
Since $A_0\ne 0$ the kernel $\ker(A_0)$ is a non-trivial subspace of $\eX$ and it is hyperinvariant for $A_0$.
It follows that $\ker(A_0)$ is a non-trivial hyperinvariant subspace for $\pA$, that is, $\pH(\pA)$ is reducible.
\end{proof}

The proof of \Cref{prop01} reveals the following result.

\begin{corollary} \label{cor02}
If $\{ 0\}\ne \pA\subseteq \pB(\eX)$ is a nilpotent algebra, then it has a non-trivial hyperinvariant subspace.
\end{corollary}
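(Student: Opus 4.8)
The plan is to reuse verbatim the argument from the proof of \Cref{prop01}, simply deleting the appeal to Grabiner's Theorem: in that proposition nilpotency of $\pA$ was \emph{derived} from the hypothesis of being a norm closed algebra of nilpotents, whereas here it is handed to us directly. So the spectral/compactness machinery plays no role at all; the whole statement is a purely algebraic consequence of nilpotency together with the observation that a kernel is automatically hyperinvariant. First I would let $n_0$ be the smallest positive integer such that every product of $n_0$ operators drawn from $\pA$ is the zero operator; such an $n_0$ exists precisely because $\pA$ is a nilpotent algebra. Since $\pA\ne\{0\}$, not every single operator of $\pA$ vanishes, so $n_0\ge 2$.

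By minimality of $n_0$, there exist operators $A_1,\ldots,A_{n_0-1}\in\pA$ whose product $A_0:=A_1\cdots A_{n_0-1}$ is non-zero. The key observation is that for every $T\in\pA$ both $A_0 T$ and $T A_0$ are products of $n_0$ operators from $\pA$ and hence vanish; thus $A_0 T=TA_0=0$ for all $T\in\pA$, which is exactly the statement $\pA\subseteq (A_0)'$. Since any operator commuting with all of $\pA$ in particular commutes with the product $A_0\in\pA$, we also have $\pA'\subseteq (A_0)'$, and therefore $\pH(\pA)\subseteq (A_0)'$.

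It then remains only to produce the subspace, and I would take $\eM=\ker(A_0)$. For any $T\in (A_0)'$ and any $x\in\eM$ one computes $A_0 Tx=TA_0 x=0$, so $Tx\in\eM$; hence $\eM$ is invariant under the entire commutant $(A_0)'$ and, by the inclusion above, under all of $\pH(\pA)$, i.e.\ it is hyperinvariant for $\pA$. I expect the only point requiring genuine care to be non-triviality. Here $A_0\ne 0$ gives $\eM\ne\eX$ immediately, while for $\eM\ne\{0\}$ I would note that $2(n_0-1)\ge n_0$ (as $n_0\ge 2$), so $A_0^2=0$; consequently the non-zero range of $A_0$ lies inside $\ker(A_0)$, forcing $\eM\ne\{0\}$. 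I do not anticipate any real obstacle: once the bookkeeping on products of length $n_0$ is in place, both the invariance and the non-triviality of $\ker(A_0)$ are immediate, and no appeal to compactness, the Nagata--Higman theorem, or Lomonosov-type arguments is needed.
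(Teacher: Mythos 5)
Your proposal is correct and is essentially the paper's own argument: the paper proves this corollary by observing that the proof of \Cref{prop01} applies verbatim once nilpotency is assumed rather than derived from Grabiner's theorem, exactly as you do. Your explicit verification that $\ker(A_0)\ne\{0\}$ via $A_0^2=0$ (since $2(n_0-1)\ge n_0$) fills in a small detail the paper leaves implicit, but the route is the same.
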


Hadwin et al. have constructed a semi-simple algebra of nilpotent operators on a separable Hilbert space
such that for each pair $(x,y)$ of vectors, where $x\ne 0$, there exists an operator $A\in \pA$ such that $Ax=y$
(see \cite[Section 4]{HNRRR}). Hence, \Cref{prop01} (and consequently \Cref{theo03} below) does not hold, in general, for non-closed algebras. However, as the following example shows there are simple special cases
when a not necessarily closed algebra generated by a set of nilpotents has a non-trivial hyperinvariant subspace.

\begin{example} \label{ex01}
The idea for this example is from \cite[Theorem 4.3]{HNRRR} where quadratic nilpotent operators are considered.
Choose and fix $\lambda \in \bC$. Let $\pS\subseteq \pB(\eX)$ be a non-empty set of
operators such that $A^2=\lambda A$ for every $A\in \pS$.
If $\pS$ is a linear manifold and contains a non-scalar operator, then $\pA(\pS)$ has a non-trivial hyperinvariant
subspace. To see this, choose a non-scalar operator $A \in \pS$.
Then its kernel $\ker(A)$ is a non-trivial hyperinvariant subspace for $A$. Since $\pA(\pS)'\subseteq (A)'$ we see that
$\ker(A)$ is invariant for every operator from $\pA(\pS)'$.  Let $B\in \pS$ be arbitrary. It follows
from $(A+B)^2=\lambda(A+B)$ that $AB=-BA$. Hence, $ABx=-BAx=0$ for every $x\in \ker(A)$, that is, $\ker(A)$ is
invariant for every $B\in \pS$. We conclude that $\ker(A)$ is invariant for every operator from the hyperalgebra
 $\pH(\pS)$.
\end{example}

\begin{proposition} \label{prop04}
Let $\pA\subseteq \pB(\eX)$ be a non-trivial subalgebra. If there exists a non-trivial nilpotent ideal $\pJ\triangleleft \pA$, then $\pA$ has a non-trivial hyperinvariant subspace.
\end{proposition}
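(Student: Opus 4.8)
The plan is to produce a single non-trivial subspace that is invariant for the whole hyperalgebra $\pH(\pA)$; since $\pH(\pA)$ is generated by $\pA$ and $\pA'$, a subspace is invariant for $\pH(\pA)$ precisely when it is invariant for every operator in $\pA$ and in $\pA'$, that is, exactly when it is hyperinvariant for $\pA$. So it suffices to exhibit a non-trivial invariant subspace for $\pH(\pA)$, and the nilpotent ideal is the tool for this. By \Cref{lem03} the ideal $\pJ_{_{\pH}}\triangleleft\pH(\pA)$ generated by $\pJ$ is again nilpotent (with the same nilpotency index), and it is non-zero because $\{0\}\ne\pJ\subseteq\pJ_{_{\pH}}$.

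First I would form the common kernel $\eN=\bigcap_{J\in\pJ_{_{\pH}}}\ker(J)$. Being an intersection of kernels of bounded operators, $\eN$ is a closed subspace. It is invariant for $\pH(\pA)$: if $x\in\eN$ and $T\in\pH(\pA)$, then for every $J\in\pJ_{_{\pH}}$ we have $JT\in\pJ_{_{\pH}}$ (as $\pJ_{_{\pH}}$ is an ideal), so $J(Tx)=(JT)x=0$, whence $Tx\in\eN$. Moreover $\eN\ne\eX$, since $\pJ_{_{\pH}}$ contains a non-zero operator $J$, and then $\eN\subseteq\ker(J)\ne\eX$.

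The crux is to show $\eN\ne\{0\}$, and this is exactly where nilpotency enters. Let $n_0$ be the least positive integer with $\pJ_{_{\pH}}^{\,n_0}=\{0\}$; then $n_0\ge 2$ and $\pJ_{_{\pH}}^{\,n_0-1}\ne\{0\}$, so there are $J_1,\ldots,J_{n_0-1}\in\pJ_{_{\pH}}$ and $x\in\eX$ with $y:=J_1\cdots J_{n_0-1}x\ne 0$. For every $J\in\pJ_{_{\pH}}$ the vector $Jy=JJ_1\cdots J_{n_0-1}x$ is the image of $x$ under a product of $n_0$ elements of $\pJ_{_{\pH}}$ and is therefore $0$; hence $y\in\eN$ and $\eN\ne\{0\}$. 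Thus $\eN$ is a non-trivial invariant subspace for $\pH(\pA)$, that is, a non-trivial hyperinvariant subspace for $\pA$.

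The only genuine difficulty is the non-triviality of $\eN$; everything else is bookkeeping with the ideal property. I expect the argument to run equally well directly at the level of $\pA$, taking $\eN=\bigcap_{J\in\pJ}\ker(J)$ and checking invariance under $\pA'$ via the fact that each $T\in\pA'$ commutes with each $J\in\pJ$, so that $J(Tx)=T(Jx)=0$ for $x\in\eN$. Routing the verification through $\pJ_{_{\pH}}$ and \Cref{lem03}, however, keeps the invariance check uniform and matches the proof scheme of \Cref{prop01}.
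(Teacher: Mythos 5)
Your proof is correct, and it takes a genuinely different route from the paper's. The paper also begins by passing to $\pJ_{_{\pH}}$ via \Cref{lem03}, but then it finishes abstractly: it invokes \Cref{cor02} (whose proof, via \Cref{prop01}, takes a non-zero product $A_0$ of $n_0-1$ elements of maximal length, so that $A_0$ annihilates the algebra on both sides, and uses $\ker(A_0)$) to conclude that $\pJ_{_{\pH}}$ is reducible, and then appeals to \cite[Lemma 7.4.6]{RR} to pass from reducibility of the non-zero ideal $\pJ_{_{\pH}}$ to reducibility of $\pH(\pA)$. You instead exhibit the hyperinvariant subspace explicitly as the common kernel $\eN=\bigcap_{J\in\pJ_{_{\pH}}}\ker(J)$, whose invariance under all of $\pH(\pA)$ is immediate from the ideal property and whose non-triviality ($\eN\ne\{0\}$) is exactly where nilpotency enters, via the range of a product of maximal length $n_0-1$; note that $\ker(A_0)\supseteq\eN$, so your subspace is a sharper version of the one implicit in the paper. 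What your approach buys is self-containment: it avoids the external lemma from \cite{RR} entirely, and, as your closing remark correctly observes, the variant $\eN=\bigcap_{J\in\pJ}\ker(J)$ (invariant under $\pA$ by the ideal property and under $\pA'$ by commutation, non-zero by nilpotency of $\pJ$ itself) even dispenses with \Cref{lem03}, giving the shortest proof of all. What the paper's route buys is uniformity: the scheme ``push the ideal into the hyperalgebra, reduce the ideal, apply \cite[Lemma 7.4.6]{RR}'' is reused verbatim in \Cref{theo04} and \Cref{theo05}, where the ideals are merely quasinilpotent (Volterra) rather than nilpotent and no common kernel is available, so the abstract reduction is the one that generalizes.
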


\begin{proof}
Let $\pJ$ be a non-trivial nilpotent ideal in $\pA(\pS)$. Then, by \Cref{lem03}, the ideal $\pJ_{_{\pH}}$
which is generated by $\pJ$ in the hyperalgebra $\pH(\pA)$ is nilpotent, as well. By \Cref{cor02} the ideal $\pJ_{_{\pH}}$ has a non-trivial hyperinvariant subspace, in particular, it is reducible. By \cite[ Lemma 7.4.6]{RR},
$\pH(\pA)$ is reducible, as well.
\end{proof}

The following two theorems show that an algebra $\pA$ of polynomially compact operators has a non-trivial hyperinvariant subspace if $\pA$ satisfies some additional condition. For instance, as we already mentioned, the key assumption in
\Cref{theo03} is that the involved algebra is norm closed.

\begin{theorem}\label{theo03}
Let $\{ 0\}\ne \pA\subseteq \pB(\eX)$ be a norm closed algebra.
If every operator in $\pA$ is quasinilpotent and polynomially compact, then $\pA$ has a non-trivial hyperinvariant subspace.
\end{theorem}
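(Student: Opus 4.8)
The plan is to split the argument according to whether or not $\pA$ contains a non-zero compact operator, handling each case with a tool already available in the excerpt. Before doing so I would record two structural facts. First, by \Cref{cor01} every operator in $\pA$, being both quasinilpotent and polynomially compact, is actually power compact; hence for each $T\in\pA$ there is some $n$ with $T^n$ compact, and since $\pA$ is an algebra we have $T^n\in\pA$. Second, I would observe that $\pA$ equals its own Jacobson radical: $\pA$ is a Banach algebra (it is norm closed), and for arbitrary $A,X\in\pA$ the product $AX$ again lies in $\pA$ and is therefore quasinilpotent, so $\rho(AX)=0$. By the spectral-radius description of the radical this forces $A\in\pR(\pA)$, and thus $\pA=\pR(\pA)$.

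With these in hand the dichotomy is quick. If $\pA$ contains a non-zero compact operator $K$, then $K$ lies in the radical $\pR(\pA)=\pA$, and Shulman's theorem \cite[Theorem 2]{Shu} immediately produces a non-trivial hyperinvariant subspace for $\pA$. If instead $\pA\cap\pK(\eX)=\{0\}$, then for every $T\in\pA$ the compact operator $T^n$ belongs to $\pA\cap\pK(\eX)$ and hence is the zero operator, so $T$ is nilpotent. Thus $\pA$ is a norm closed algebra of nilpotent operators, and \Cref{prop01} shows that $\pH(\pA)$ is reducible, i.e.\ $\pA$ has a non-trivial hyperinvariant subspace. Since the two cases are exhaustive, the theorem follows.

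I expect the delicate step to be the identification $\pA=\pR(\pA)$ together with the exact reading of Shulman's theorem. One must confirm that the spectral radius governing the radical is the intrinsic one of the Banach algebra $\pA$, which is unproblematic here because Gelfand's formula makes it agree with the spectral radius computed in $\pB(\eX)$, and one must make sure the compact operator supplied in the first case genuinely sits in $\pR(\pA)$ rather than merely in $\pA$. By contrast, the nilpotent case is essentially routine, resting only on Grabiner's theorem as invoked through \Cref{prop01}.
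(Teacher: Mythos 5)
Your proof is correct, but in the main case it takes a genuinely different route from the paper. The paper splits on whether every operator in $\pA$ is nilpotent: in the nilpotent case it invokes \Cref{prop01} (as you do), while in the non-nilpotent case it picks a non-nilpotent $T\in\pA$, sets $K=T^m\ne 0$ compact, forms the ideal of $\pA_1$ generated by $K$ (a Volterra algebra, hence finitely quasinilpotent by \cite[Theorem 1]{Shu}), uses \cite[Lemma 1]{Shu} to show $\rho(AK)=0$ for every $A$ in the hyperalgebra $\pH(\pA)$, and concludes reducibility of $\pH(\pA)$ from Lomonosov's Lemma (\Cref{lem02}). You instead split on whether $\pA\cap\pK(\eX)$ is trivial, and in the compact case you observe that $\pA$, being a norm closed algebra of quasinilpotents, is a radical Banach algebra --- all products $\lambda A+AX$ stay in $\pA$ and are quasinilpotent, so the quasi-regularity/spectral-radius characterization of the radical gives $\pA=\pR(\pA)$ --- whence \cite[Theorem 2]{Shu} applies verbatim to the nonzero compact $K\in\pR(\pA)$. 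The two dichotomies are equivalent (a non-nilpotent power compact $T$ yields $0\ne T^m\in\pA\cap\pK(\eX)$, and conversely $\pA\cap\pK(\eX)=\{0\}$ forces nilpotency via \Cref{cor01}), so the difference is purely in how the compact-containing case is closed out: your argument is shorter because it outsources the Lomonosov-type work to Shulman's Theorem 2 as a black box --- a legitimate move, since the paper itself quotes that statement in the introduction and uses it in the proofs of \Cref{theo04} and \Cref{theo05} --- whereas the paper's proof is more self-contained, in effect re-deriving the needed instance of Shulman's theorem at the level of the hyperalgebra using only \cite[Theorem 1]{Shu}, \cite[Lemma 1]{Shu} and \Cref{lem02}. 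Your concern about where the spectral radius is computed is indeed immaterial, by Gelfand's formula; the only point worth stating slightly more carefully is the radical characterization itself (quasinilpotence of all products $BA$ with $B$ ranging over $\pA$, or over the unitization, suffices since $\pA$ is an algebra), and that is a standard fact about radical Banach algebras, not a gap.
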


\begin {proof}
Note first that each operator in $\pA$ is power compact, by \Cref{cor01}. If each operator in $\pA$ is nilpotent, then
$\pH(\pA)$ is reducible, by \Cref{prop01}.
Assume therefore that there exists an operator $T\in \pA$ which is not nilpotent. Since $T$ is power compact there exists $m\in \bN$ such that $K=T^m\ne 0$  is compact.
Let $\pJ$ be the two-sided ideal in $\pA_1$ generated by $K$. It is clear that $K\in \pJ\subseteq \pA$. Hence, $\pJ$ is an algebra
of Volterra operators. By \cite[Theorem 1]{Shu}, $\pJ$ is finitely quasinilpotent.
Let $A=B_0+\sum_{i=1}^{n} A_iB_i$, where $ A_i\in \pA$ and $B_i\in \pA'$, be an arbitrary operator in $\pH(\pA)$
and let $\pM=\{K,A_1K,\ldots,A_nK\}$. Since $\pM$ is a finite subset of $\pJ$ it is a quasinilpotent set, that is,
$\rho(\pM)=0$. Let
$\pN=\{B_0, B_1,\ldots,B_n\}\subseteq \pA'$. Since $B_0, B_1,\ldots,B_n$ commute with operators from $\pM$
we have $\rho(AK)\leq (n+1)\rho(\pM) \rho(\pN)=0$,
by \cite[Lemma 1]{Shu}. This shows that operator $AK$ is quasinilpotent for each $A\in \pH(\pA)$.
It follows, by \Cref{lem02}, that the hyperalgebra $\pH(\pA)$ is reducible.
 \end {proof}

\begin{theorem} \label{theo04}
Let $\pS\subseteq \pB(\eX)$ be an essentially commutative set of polynomially compact operators which contains at least one non-scalar operator. If $\pS$ is triangularizable, then $\pA(\pS)$ has a non-trivial closed hyperinvariant subspace.
\end{theorem}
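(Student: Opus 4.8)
The plan is to prove the stronger statement that the hyperalgebra $\pH(\pS)$ is reducible; since $\pA(\pS)'=\pS'$ and $\pH(\pS)=\pH(\pA(\pS))$, a non-trivial invariant subspace of $\pH(\pS)$ is exactly a non-trivial closed hyperinvariant subspace of $\pA(\pS)$. The engine of the argument is that essential commutativity and triangularizability together produce Volterra operators inside $\pA(\pS)$. Using the derivation identities $[XY,Z]=X[Y,Z]+[X,Z]Y$ and $[X,YZ]=[X,Y]Z+Y[X,Z]$ together with induction, every commutator $[A,B]$ with $A,B\in\pA(\pS)$ is a finite sum of operators each carrying a factor $[S_i,S_j]$, hence compact by essential commutativity; consequently the commutator ideal $\pC\triangleleft\pA(\pS)$ consists of compact operators. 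First I would check that each element of $\pC$ is quasinilpotent: fixing a maximal triangularizing chain $\eC$ for $\pS$, every element of $\pA(\pS)$ leaves each member of $\eC$ invariant and induces a scalar on each one-dimensional gap, so every element of $\pC$ induces $0$ on such gaps; Ringrose's diagonal argument (as in \cite{RR}) then gives that a compact operator with vanishing diagonal relative to $\eC$ is quasinilpotent. Thus $\pC$ is an algebra of Volterra operators and, by \cite[Theorem 1]{Shu}, it is finitely quasinilpotent.

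I would then split into two cases according to whether $\pS$ is commutative. If $\pS$ is commutative, pick a non-scalar $T\in\pS$. Then $\pS\subseteq\pS'\subseteq(T)'$, so every subspace which is hyperinvariant for $T$ alone (that is, invariant under $(T)'$) is automatically invariant for $\pS$ and for $\pS'$, hence hyperinvariant for $\pS$. It therefore suffices to produce a non-trivial hyperinvariant subspace for the single non-scalar polynomially compact operator $T$: if $m_T(T)\ne 0$ this is a non-zero compact operator commuting with the non-scalar $T$, so \Cref{theo06} applies; if $m_T(T)=0$ then $T$ is algebraic and non-scalar, and either a Riesz spectral projection (when $\sigma(T)$ has at least two points) or $\ker(T-\lambda I)$ for the nilpotent part (when $\sigma(T)=\{\lambda\}$) furnishes a non-trivial closed subspace invariant under $(T)'$.

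If $\pS$ is not commutative, choose $S_1,S_2\in\pS$ with $C:=[S_1,S_2]\ne 0$; by the above, $C$ is a non-zero compact operator in $\pC$. Here I would mirror the proof of \Cref{theo03}. Assume for contradiction that $\pH(\pS)$ has no non-trivial invariant subspace, and write an arbitrary $A\in\pH(\pS)$ as $A=\sum_{i=0}^{n}B_iV_i$ with $B_i\in\pS'$ and $V_i\in\pA(\pS)\cup\{I\}$, so that $AC=\sum_{i=0}^{n}B_i(V_iC)$. Since $\pC$ is an ideal of $\pA(\pS)$, the finite set $\pM=\{V_0C,\ldots,V_nC\}$ lies in $\pC$ and hence $\rho(\pM)=0$, while $\pN=\{B_0,\ldots,B_n\}\subseteq\pS'$ commutes with every $S\in\pS$ and with every $V_i$, hence with every element of $\pM$. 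By \cite[Lemma 1]{Shu}, $\rho(AC)\le(n+1)\rho(\pM)\rho(\pN)=0$, so $AC$ is quasinilpotent for every $A\in\pH(\pS)$. Applying \Cref{lem02} to the algebra $\pH(\pS)$ and the non-zero compact operator $C$ produces $A\in\pH(\pS)$ and $x\ne 0$ with $ACx=x$, contradicting the quasinilpotence of $AC$. Hence $\pH(\pS)$ is reducible.

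The step I expect to be the main obstacle is the quasinilpotence of the commutator ideal: converting triangularizability into an honest spectral-radius statement in a general (non-Hilbert) Banach space, i.e.\ invoking the correct Ringrose-type diagonal argument from \cite{RR} and making sure it applies to the possibly non-atomic gaps of the maximal chain rather than only to one-dimensional gaps. Once that is secured, the commutative case is elementary and the non-commutative case is a direct transcription of the mechanism already used in \Cref{theo03}; note in particular that polynomial compactness is only needed to settle the commutative case, where no non-zero compact commutator is available.
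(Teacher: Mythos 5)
Your proof is correct, and it follows the paper's broad skeleton (the same dichotomy between the commutative and the non-commutative case, with Lomonosov handling the former and a Volterra ideal handling the latter) while replacing the paper's key citations with more self-contained arguments. Where you derive compactness of commutators from the derivation identities and quasinilpotence of the commutator ideal $\pC$ from the vanishing of diagonal coefficients plus Ringrose's theorem, the paper instead quotes Konvalinka \cite[Theorem 1.5]{Kon} (polynomial compactness passes to $\pA(\pS)$), observes that $\pi(\pA(\pS))$ is commutative in the Calkin algebra, and then invokes Kandi\'c \cite[Theorem 3.5]{Kan} to conclude that triangularizability forces all commutators in $\pA(\pS)$ to be quasinilpotent; it still needs Ringrose \cite[Theorem 7.2.3]{RR} to handle general elements of the ideal generated by a commutator, so your diagonal argument is essentially what underlies the paper's step too, just made explicit and with fewer external inputs. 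In the endgame your routes diverge genuinely: the paper pushes the Volterra ideal $\pJ$ up to $\pJ_{_{\pH}}\triangleleft\pH(\pS)$ via \Cref{lem01} and Shulman's joint-spectral-radius estimates, and then concludes reducibility from Shulman's theorem \cite[Theorem 2]{Shu}; you instead transcribe the mechanism of \Cref{theo03}, showing $\rho(AC)=0$ for all $A\in\pH(\pS)$ and closing with Lomonosov's Lemma (\Cref{lem02}). Both are valid; the paper's version buys a reusable structural fact (a Volterra ideal in the hyperalgebra), while yours avoids \cite[Theorem 2]{Shu} altogether. Your observation that polynomial compactness is needed only in the commutative case is correct and is implicit, though unremarked, in the paper's proof as well.

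The obstacle you flagged is not actually an obstacle. A chain that is maximal as a chain of subspaces of $\eX$ has gaps $\eM/\eM_-$ of dimension at most one: if some gap had dimension at least two, picking $x\in\eM\setminus\eM_-$ would give the closed subspace $\overline{\eM_-+\bC x}$ strictly between $\eM_-$ and $\eM$, contradicting maximality. The ``non-atomic'' portions of the chain are exactly the places where $\eM_-=\eM$, which contribute no diagonal coefficients, and Ringrose's theorem as stated in \cite[Theorem 7.2.3]{RR} is formulated for compact operators on Banach spaces relative to precisely such chains: the nonzero spectrum of a compact operator equals its set of nonzero diagonal coefficients. Since the diagonal-coefficient map at each one-dimensional gap is an algebra homomorphism on the algebra of operators leaving the chain invariant, it kills commutators and hence all of $\pC$, so your quasinilpotence claim goes through verbatim.
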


\begin{proof}
It is obvious that every closed subspace of $\eX$ which is invariant for $\pS$ is invariant for $\pA(\pS)$, as well.
Hence, $\pA(\pS)$ is triangularizable. Since $\pS$ consists of polynomially compact operators, the same holds for
the algebra $\pA(\pS)$, by \cite[Theorem 1.5]{Kon}. Denote by $\pi\bigl(\pA(\pS)\bigr)$ and $\pi(\pS)$ the image
of $\pA(\pS)$ and $\pS$, respectively, in the Calkin algebra $\pB(\eX)/\pK(\eX)$. Since $\pi\bigl(\pA(\pS)\bigr)$ is
the subalgebra of the Calkin algebra generated by the commutative set $\pi(\pS)$ we see that $\pi\bigl(\pA(\pS)\bigr)$
is commutative,  as well. It follows that $\pA(\pS)$ is an essentially commutative subalgebra of $\pB(\eX)$. By
\cite[Theorem 3.5]{Kan}, for an algebra of essentially commuting polynomially compact operators triangularizability of
$\pA(\pS)$ is equivalent to the fact that each commutator $[S,T]$, where $S, T\in \pA(\pS)$, is a quasinilpotent operator.

Suppose that there exist operators $S, T\in \pA(\pS)$ such that the commutator $K:=[S,T]$ is non-zero.
Let $\pJ$ be the ideal in $\pA(\pS)$ generated by $K$. Of course, $\pJ$ consists of compact operators and, by Ringrose's Theorem (see \cite[Theorem 7.2.3]{RR}), every operator in $\pJ$ is quasinilpotent. Hence, $\pJ$ is an algebra of Volterra operators. Let $\pJ_{_{\pH}}$ be the ideal
in the hyperalgebra $\pH(\pS)$ generated by $\pJ$. It is clear that $\pJ_{_{\pH}}\subseteq \pK(\eX)$.
We claim that $\pJ_{_{\pH}}$ consists of quasinilpotent operators, as well. Choose an arbitrary operator $A\in \pJ_{_{\pH}}$. By \Cref{lem01} there exist operators $J_1,\ldots,J_n\in \pJ$ and operators $B_1,\ldots,B_n\in \pA(\pS)'$ such that
$A=\sum_{i=1}^{n} J_i B_i$. By \cite[Lemma 1]{Shu}, $\rho(A)\leq n \rho(\{J_1,\ldots,J_n\})  \rho (\{B_1,\ldots, B_n\})$. Since each finite subset of an algebra of Volterra operators is quasinilpotent, by \cite[Theorem 1]{Shu}, we have $\rho(\{J_1,\ldots,J_n\})=0$ and consequently $\rho(A)=0$. We have proved that $\pJ_{_{\pH}}$ is a non-trivial
ideal of Volterra operators. By \cite[Theorem 2]{Shu}, the hyperalgebra $\pH(\pS)$ is reducible.

Suppose now that $\pA(\pS)$ is commutative. Hence $\pA(\pS)\subseteq \pA(\pS)'$. Let $T$ be any non-scalar polynomially compact operator in $\pA(\pS)$ and let $m_T$ be its minimal polynomial. Hence, $m_T(T)$ is a compact operator. If $m(T)$ is a non-zero compact operator, then it has a non-trivial hyperinvariant subspace $\eY$, by Lomonosov's Theorem. Since $\pA(\pS)\subseteq \pA(\pS)'\subseteq (m_T(T))'$ subspace $\eY$ is invariant for every
operator in $\pA(\pS)$ and $\pA(\pS)'$. Thus, $\pH(\pS)$ is reducible. On the other hand, if
$m_T(T)=0$, then $T$ is a non-scalar algebraic operator. Hence, for every $\lambda\in \sigma(T)$, the kernel
$\ker(T-\lambda I)$ is a closed non-trivial hyperinvariant subspace for $T$, and consequently, for $\pA(\pS)$ as
$\pA(\pS)\subseteq \pA(\pS)'\subseteq (T)'$.
\end{proof}

\section{Hyperinvariant subspaces for operator bands} \label{section4}

An operator band on a Banach space $\eX$ is a (multiplicative) semigroup $\pS\subseteq \pB(\eX)$ of idempotents,
that is, $S^2=S$ for each $S\in \pS$. A linear span of an operator band $\pS$ is the algebra $\pA(\pS)$ called a band
algebra. We will denote by $\pN(\pS)$ the set of all nilpotent operators in the band algebra $\pA(\pS)$. By
\cite[Theorem 5.2]{LMMR}, $\pN(\pS)$ is the linear span of $[\pS,\pS]$, that is,
$\pN(\pS)=\{ \sum_{i=1}^{n}[A_i,B_i];\; n\in \bN,\, A_i,B_i\in \pA(\pS)\}$.
By \cite[Corollary 5.5]{LMMR}, the set $\pN(\pS)$ of nilpotent operators in $\pA(\pS)$ coincides with the Jacobson radical $\rad\bigl(\pA(\pS)\bigr)$ of $\pA(\pS)$.

\begin{proposition} \label{prop02}
Let $\{ 0\}\ne \pS\subseteq \pB(\eX)$ be an operator band. If $\pN\subseteq \pN(\pS)$ is a non-empty set, then
the ideal $\pJ_{_{\pH}}$ in the hyperalgebra $\pH(\pS)$ generated by $\pN$  is a nil-ideal.
\end{proposition}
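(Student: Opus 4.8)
The plan is to show directly that every element of $\pJ_{_{\pH}}$ is nilpotent. Write $\pJ$ for the ideal generated by $\pN$ in the band algebra $\pA(\pS)$. Since $\pN\subseteq \pN(\pS)=\rad\bigl(\pA(\pS)\bigr)$ and the radical is a two-sided ideal, we have $\pJ\subseteq \pN(\pS)$; in particular every operator in $\pJ$ is nilpotent. By \Cref{lem01}, applied to $\pA=\pA(\pS)$ (and recalling $\pH(\pS)=\pH(\pA(\pS))$), we have $\pJ_{_{\pH}}=\pA(\pS)'\pJ$, so a typical element of $\pJ_{_{\pH}}$ has the form $A=\sum_{i=1}^{n}B_iJ_i$ with $B_i\in \pA(\pS)'$ and $J_i\in \pJ\subseteq \pN(\pS)$.

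The first step is a commutation trick. Because each $B_i\in \pA(\pS)'=\pS'$ commutes with every operator in $\pA(\pS)$, and the $J_j$ belong to $\pA(\pS)$, in any product $B_{i_1}J_{i_1}\cdots B_{i_m}J_{i_m}$ one may slide all the $J$'s to the right past the $B$'s without changing their relative order. Hence
\[
A^m=\sum_{i_1,\ldots,i_m=1}^{n}\bigl(B_{i_1}\cdots B_{i_m}\bigr)\bigl(J_{i_1}\cdots J_{i_m}\bigr).
\]
Thus it suffices to find an $m$ for which every product $J_{i_1}\cdots J_{i_m}$ of $m$ operators from the finite set $\{J_1,\ldots,J_n\}$ vanishes; equivalently, it suffices to prove that the associative subalgebra of $\pA(\pS)$ generated by any finite subset of $\pN(\pS)$ is nilpotent.

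The crux is to reduce this to a finite-dimensional algebra. Each $J_i$ is a finite linear combination of idempotents from $\pS$, so $J_1,\ldots,J_n$ all lie in the linear span of a finite subset $\pS_0\subseteq \pS$. Let $\pT$ be the subsemigroup of $\pS$ generated by $\pS_0$. Then $\pT$ is again an operator band, and being finitely generated it is finite — this is the classical fact (going back to Green and Rees) that every finitely generated band is finite. Consequently the band algebra $\pA(\pT)=\mathrm{span}(\pT)$ is finite-dimensional and contains every $J_i$. Since each $J_i$ is a nilpotent operator lying in $\pA(\pT)$, it belongs to $\pN(\pT)$, which by \cite[Corollary 5.5]{LMMR} applied to $\pT$ equals $\rad\bigl(\pA(\pT)\bigr)$. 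As $\pA(\pT)$ is finite-dimensional, its Jacobson radical is a nilpotent ideal, say $\rad\bigl(\pA(\pT)\bigr)^m=\{0\}$. Then every product $J_{i_1}\cdots J_{i_m}=0$, whence $A^m=0$ and $A$ is nilpotent; as $A\in\pJ_{_{\pH}}$ was arbitrary, $\pJ_{_{\pH}}$ is a nil-ideal.

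I expect the main obstacle to be the passage from \emph{nilpotent operator} to \emph{radical element} inside $\pA(\pT)$: in an arbitrary finite-dimensional algebra a nilpotent element need not lie in the radical, so this step genuinely relies on the band-specific identification $\pN(\pT)=\rad\bigl(\pA(\pT)\bigr)$. The other essential input is the finiteness of finitely generated bands, which is exactly what makes $\pA(\pT)$ finite-dimensional and hence forces its radical to be nilpotent. Note finally that the index $m$ depends on $A$ through the finite set $\pS_0$, so the argument produces that $\pJ_{_{\pH}}$ is nil but not a priori nilpotent, consistent with the statement.
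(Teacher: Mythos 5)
Your proof is correct, and it follows the paper's skeleton up to the decisive step: like the paper, you use \Cref{lem01} to write a generic element as $A=\sum_i B_iJ_i$ with $B_i\in\pA(\pS)'$ and $J_i$ nilpotent in the ideal $\pJ\subseteq\pN(\pS)$ (the paper notes explicitly, as you do implicitly, that $\pN$ and $\pJ$ generate the same ideal in $\pH(\pS)$), then commute the $B$'s past the $J$'s and invoke the Green--Rees theorem to place $J_1,\ldots,J_n$ inside the span of a finite band $\pT$. Where you diverge is in proving that products of the $J_i$'s of some fixed length vanish. The paper applies the triangularization theorem for finite bands \cite[Theorem 9.3.15]{RR}, obtaining a finite chain $\{0\}=\eX_0\subseteq\cdots\subseteq\eX_m=\eX$ of invariant subspaces on whose quotients each idempotent of $\pT$ acts as $0$ or $I$; each nilpotent $J_i$ then acts as a scalar, hence as $0$, on every quotient, so $J_i(\eX_j)\subseteq\eX_{j-1}$ and any product of length $m$ dies. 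You instead argue abstractly: since $\pT$ is finite, $\pA(\pT)$ is finite-dimensional, the nilpotent operators $J_i$ lie in $\pN(\pT)=\rad\bigl(\pA(\pT)\bigr)$ by \cite[Corollary 5.5]{LMMR}, and the radical of a finite-dimensional algebra is nilpotent. Both arguments are sound, and you correctly flag the one genuine pitfall in your route, namely that a nilpotent element of an arbitrary finite-dimensional algebra need not lie in the radical, so the band-specific identification from \cite{LMMR} is essential. The trade-off: the paper's geometric argument yields an explicit nilpotency index ($m$ = length of the invariant chain) and stays self-contained within the triangularization machinery of \cite{RR}, while yours is shorter but imports a second band-theoretic result and gives no concrete handle on the index; in both versions the index depends on $A$ through the finite generating set, so, as you note, one only gets that $\pJ_{_{\pH}}$ is nil, which is exactly what the proposition asserts.
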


\begin{proof}
Denote by $\pJ$ the ideal in $\pA(\pS)$ generated by $\pN$. We have already mentioned that $\rad\bigl(\pA(\pS)\bigr)=\pN(\pS)$. Since the radical is an ideal, we have that
$\pJ \subseteq \rad\bigl(\pA(\pS)\bigr)$. Hence, $\pJ$ is a nil-ideal in $\pA(\pS)$.

It is clear that $\pN$ and ideal $\pJ$ generate the same ideal $\pJ_{_{\pH}}$ in the hyperalgebra $\pH(\pS)$.
Let $A\in \pJ_{_{\pH}}$ be arbitrary. By \Cref{lem01}, there exist $n\in\bN$, $J_i\in \pJ$ and $B_i\in \pA(\pS)'$ such
that $A=\sum_{i=1}^n J_i B_i$. Let $\pF$ be a finite subset of $\pS$ such that $J_1,\ldots,J_n$ are contained in the algebra $\pA(\pF)$ generated by $\pF$. Note that such finite sets exist because each $J_i$ is of the form
$p_i(S_{1}^{(i)}, \ldots, S_{k_i}^{(i)})$, where $p_i$ is a polynomial of $k_i$ non-commuting variables and
$S_{1}^{(i)}, \ldots, S_{k_i}^{(i)}$ $(i=1, \ldots, n)$ are idempotents from $\pS$. Denote by $\pS_0$ the operator
band ganerated by $\pF$. Thus, $\pA(\pF)\subseteq \pA(\pS_0)$. Since $\pF$ is finite,  the operator band $\pS_0$ is finite, as well, by the Green-Rees theorem (see \cite[Theorem 9.3.11]{RR}). Now we apply \cite[Theorem 9.3.15]{RR} which says that there exists a finite chain $\{0\}=\eX_0\subseteq \eX_1\subseteq \cdots\subseteq \eX_{m}=\eX$
of invariant subspaces for $\pS_0$ such that for each $E\in \pS_0$ the operator induced by $E$ on $\eX_i/\eX_{i-1}$ is either zero or the identity operator. This implies that operator induced by $J_i$ on $\eX_j/\eX_{j-1}$ is a scalar multiple of the identity operator.
Since each $J_i$ is nilpotent, every operator $J_i$ induces the zero operator on $\eX_j/\eX_{j-1}$. Hence, for each $i$ and each $j$ we have $J_i(\eX_j)\subseteq \eX_{j-1}$. This implies that an arbitrary product of length at least $m$ with letters from $\{J_1,\ldots,J_n\}$ is zero. Now it is obvious that $A^{m}=\left(\sum_{i=1}^n J_i B_i\right)^m=0$
as $J_i$ and $B_j$ commute.
\end{proof}

\begin{proposition} \label{prop03}
Let $\{ 0\}\ne \pS\subseteq \pB(\eX)$ be an operator band. If $K\in \overline{\pA(\pS)}$ is a compact operator,
then for each operator $A\in \overline{\pH(\pS)}$ the commutator $[K,A]$ is in the Jacobson radical of
$\overline{\pH(\pS)}$, that is, $[K,\overline{\pH(\pS)}]\subseteq \rad\bigl(\overline{\pH(\pS)}\bigr)$.
\end{proposition}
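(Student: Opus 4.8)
The plan is to verify the radical membership through the spectral characterization of the Jacobson radical. Since $I\in\pS'\subseteq\pH(\pS)$, the Banach algebra $\mathcal B:=\overline{\pH(\pS)}$ is unital, so $x\in\rad(\mathcal B)$ if and only if $\rho(Cx)=0$ for every $C\in\mathcal B$. Fixing $A\in\overline{\pH(\pS)}$, it therefore suffices to prove that $\rho\bigl(C[K,A]\bigr)=0$ for every $C\in\overline{\pH(\pS)}$, and since $A$ is arbitrary this gives the asserted inclusion $[K,\overline{\pH(\pS)}]\subseteq\rad\bigl(\overline{\pH(\pS)}\bigr)$.

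First I would settle the statement at the purely algebraic level. Take $K'\in\pA(\pS)$, an element $A'=B_0+\sum_{i=1}^{n}S_iB_i\in\pH(\pS)$ with $S_i\in\pS$ and $B_i\in\pS'$, and $C'\in\pH(\pS)$. Every $B_i$ lies in $\pS'=\pA(\pS)'$ and hence commutes with $K'\in\pA(\pS)$, so a direct computation collapses the commutator to
\[
[K',A']=\sum_{i=1}^{n}[K',S_i]B_i ,
\]
the term $[K',B_0]$ vanishing. Each $[K',S_i]$ is a commutator of two elements of $\pA(\pS)$ and therefore belongs to $\pN(\pS)$ by the description $\pN(\pS)=\{\sum_i[A_i,B_i]\}$. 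If all these commutators vanish the claim is trivial; otherwise $\{[K',S_i]\}_i$ is a non-empty subset of $\pN(\pS)$, so by \Cref{prop02} the ideal $\pJ_{_{\pH}}$ it generates in $\pH(\pS)$ is a nil-ideal. By \Cref{lem01} the element $[K',A']=\sum_i[K',S_i]B_i$ lies in $\pJ_{_{\pH}}$, and since $C'\in\pH(\pS)$ and $\pJ_{_{\pH}}$ is an ideal of $\pH(\pS)$, the product $C'[K',A']$ again lies in the nil-ideal $\pJ_{_{\pH}}$ and is thus nilpotent; in particular $\rho\bigl(C'[K',A']\bigr)=0$.

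The remaining, and main, difficulty is to pass from this nilpotence at the algebraic level to quasinilpotence for the closures, where the nilpotency indices are not uniformly bounded and norm limits of quasinilpotent operators need not be quasinilpotent. Here the compactness of $K$ is decisive. Choose $K_k\in\pA(\pS)$ and $A_k,C_k\in\pH(\pS)$ with $K_k\to K$, $A_k\to A$, $C_k\to C$ in norm. Then $C_k[K_k,A_k]\to C[K,A]$, each $C_k[K_k,A_k]$ is nilpotent by the previous paragraph, and $C[K,A]=C(KA-AK)$ is compact because $K$ is. A norm limit of quasinilpotent operators which is itself compact is quasinilpotent: any nonzero point $\lambda\in\sigma\bigl(C[K,A]\bigr)$ would be an isolated eigenvalue (the nonzero spectrum of a compact operator consists of isolated eigenvalues), and by Newburgh's theorem an isolated spectral point of the limit is approached by spectral points of the approximants $C_k[K_k,A_k]$, whose spectra are $\{0\}$; hence $\lambda=0$, forcing $\sigma\bigl(C[K,A]\bigr)=\{0\}$. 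Therefore $\rho\bigl(C[K,A]\bigr)=0$ for every $C\in\overline{\pH(\pS)}$, and the spectral characterization of the radical yields $[K,A]\in\rad\bigl(\overline{\pH(\pS)}\bigr)$, completing the proof.
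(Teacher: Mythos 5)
Your proof is correct and takes essentially the same route as the paper: both reduce to the spectral characterization of the radical, collapse $[K',A']$ to $\sum_i [K',S_i]B_i$ using that the commutant elements commute with $\pA(\pS)$, place the result in the nil-ideal of $\pH(\pS)$ via \Cref{prop02}, and pass to the limit using compactness of the commutator. The only minor variation is that you approximate $A$ and $C$ along with $K$ and justify the limit step by a Newburgh-type isolated-spectral-point argument, where the paper simply invokes the continuity of the spectral radius at compact operators (both twice-used facts amount to the same thing, and your self-contained justification is sound).
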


\begin{proof}
Let $K\in \overline{\pA(\pS)}$ be a compact operator and let $A\in \overline{\pH(\pS)}$ be an arbitrary operator.
In order to prove that the commutator $[K,A]$ is in $\rad\bigl(\overline{\pH(\pS)}\bigr)$, we need to show that
$[K,A]C$ is quasinilpotent for each $C\in \overline{\pH(\pS)}$. Since $[K,A]C$ is compact, the continuity of the spectral radius at compact operators yields that it suffices to prove that $[K,A]C$ is quasinilpotent for all $A, C\in \pH(\pS)$.

Let $A,C\in \pH(\pS)$ be arbitrary. Since $K$ belongs to the closure of $\pA(\pS)$ there is a sequence $(K_n)_{n\in \bN}\subseteq \pA(\pS)$ which converges to $K$. We claim that for each $n\in \bN$ the commutator $[K_n,A]$ is nilpotent and belongs to the ideal $\pN(\pS)_{_{\pH}}$ in $\pH(\pS)$ generated by the Jacobson
radical $\pN(\pS)$ of the band algebra $\pA(\pS)$. Since $A\in\pH(\pS)$, there exist operators
$A_1,\ldots, A_k\in \pA(\pS)$ and $B_0,\ldots,B_k\in \pA(\pS)'$ such that $A=B_0+\sum_{i=1}^kA_iB_i$.
It follows that $[K_n,A]=\sum_{i=1}^k[K_n,A_i]B_i.$ Since  every commutator $[K_n,A_i]$ is in
$\pN(\pS)$ and $\pN(\pS)\subseteq \pN(\pS)_{_{\pH}}$ it follows $[K_n,A_i] \in \pN(\pS)_{_{\pH}}$. Since
$\pN(\pS)_{_{\pH}}$ is an ideal in the hyperalgebra $\pH(\pS)$ we have $[K_n,A]C\in \pN(\pS)_{_{\pH}}$.
By \Cref{prop02}, the operator $[K_n,A]C$ is nilpotent. To finish the proof we apply the fact that the spectral radius is continuous at compact operators and that the compact operator $[K,A]C$ is the limit of the sequence $([K_n,A]C)_{n\in \bN}$ of nilpotent operators.
\end{proof}

\begin{theorem} \label{theo05}
Let $\pS\subseteq \pB(\eX)$ be an operator band. If there exists a non-zero compact operator in $\overline{\pA(\pS)}$, then $\pS$ has a non-trivial hyperinvariant subspace.
\end{theorem}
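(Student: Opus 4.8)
The plan is to prove the equivalent statement that the norm closed hyperalgebra $\pB:=\overline{\pH(\pS)}$ is reducible. This suffices because a non-trivial subspace invariant for $\pB$ is automatically invariant for both $\pS$ and $\pS'$ (since $\pS,\pS'\subseteq\pH(\pS)\subseteq\pB$), hence is a non-trivial hyperinvariant subspace for $\pS$. Note that the prescribed non-zero compact operator satisfies $K\in\overline{\pA(\pS)}\subseteq\pB$, and by \Cref{prop03} we have $[K,A]\in\rad(\pB)$ for every $A\in\pB$. The argument then splits according to whether $K$ lies in the centre of $\pB$.

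First I would dispose of the case when $K$ is not central, that is, when there exists $A\in\pB$ with $[K,A]\neq 0$. Then $[K,A]$ is a non-zero operator which is compact, being a commutator involving the compact operator $K$, and which, by \Cref{prop03}, lies in $\rad(\pB)$. Thus the Jacobson radical of $\pB$ contains a non-zero compact operator, and Shulman's theorem \cite[Theorem 2]{Shu} produces a non-trivial hyperinvariant subspace for $\pB$; in particular $\pB$ is reducible.

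It remains to treat the case $K\in\pB'$, so that $K$ is a non-zero compact operator in the centre of $\pB$. If $K$ has a non-zero eigenvalue $\mu$, then $\eM:=\ker(K-\mu I)$ is a non-zero, finite-dimensional (hence proper) subspace which, because $K$ commutes with every $C\in\pB$, is invariant for $\pB$; this settles reducibility at once. The delicate case, which I expect to be the main obstacle, is when $K$ is a non-zero Volterra operator, for then the eigenspaces of $K$ carry no information. Here I would argue by contradiction: assuming $\pB$ has no non-trivial invariant subspace, Lomonosov's Lemma (\Cref{lem02}), applied to the algebra $\pB$ and the compact operator $K$, yields $A\in\pB$ and $x\neq 0$ with $AKx=x$. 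Writing $B:=AK$, the subspace $\eN:=\ker(B-I)$ is non-zero and finite-dimensional, and since $K$ is central it commutes with $B$ and therefore leaves $\eN$ invariant. As $\sigma(K)=\{0\}$, every eigenvalue of the finite-dimensional operator $K|_{\eN}$ equals zero, so $K|_{\eN}$ is nilpotent and there is a non-zero $y\in\eN$ with $Ky=0$; but then $y=By=A(Ky)=0$, a contradiction. Consequently $\pB$ is reducible in this case as well, and in all cases we obtain the desired non-trivial hyperinvariant subspace for $\pS$.
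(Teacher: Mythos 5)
Your proposal is correct and takes essentially the same route as the paper's proof: both reduce the problem to reducibility of $\overline{\pH(\pS)}$, split on whether $K$ commutes with everything in $\overline{\pH(\pS)}$, and in the non-central case combine \Cref{prop03} with Shulman's theorem \cite[Theorem 2]{Shu}. The only differences are cosmetic: you apply Shulman's theorem directly to $\overline{\pH(\pS)}$ where the paper routes through the Volterra ideal generated by $[K,A]$ and \cite[Lemma 7.4.6]{RR}, and in the central case you unpack the standard Lomonosov argument (eigenvalue versus quasinilpotent dichotomy via \Cref{lem02}) where the paper simply invokes Lomonosov's theorem to get a hyperinvariant subspace for the non-scalar compact operator $K$.
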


\begin{proof}
Let $K$ be a non-zero compact operator in $\overline{\pA(\pS)}$. Then, by \Cref{prop03}, $[K,A]$ is a compact operator contained in the radical $\rad\bigl(\pH(\pS)\bigr)$ for each $A\in \overline{\pH(\pS)}$. If $[K,A]=0$ for
every $A\in \overline{\pH(\pS)}$, then $\overline{\pH(\pS)}\subseteq (K)'$ and therefore every non-trivial hyperinvariant subspace of $K$ is a non-trivial hyperinvariant subspace for $\pA(\pS)$. Hence, we may assume that for some $A\in \overline{\pH(\pS)}$ the commutator $[K,A]$ is a non-zero operator. Let $\pJ$ be the ideal in $\overline{\pH(\pS)}$ generated by $[K,A]$. Of course, each operator in $\pJ$ is compact.
Since, by \Cref{prop03}, $[K,A]$ is in the Jacobson ideal of $\rad\bigl(\pH(\pS)\bigr)$ we have
$\pJ=\pH(\pS) [K,A] \pH(\pS)\subseteq \pH(\pS) \rad\bigl(\pH(\pS)\bigr) \pH(\pS)\subseteq \rad\bigl(\pH(\pS)\bigr)$.
It follows that operators in $\pJ$ are quasinilpotent. Thus, $\pJ$ is a Volterra ideal in $\overline{\pH(\pS)}$.
By \cite[Theorem 2]{Shu}, $\pJ$ is reducible. Now we apply \cite[ Lemma 7.4.6]{RR} and conclude that $\overline{\pH(\pS)}$ is reducible.
\end{proof}

\begin{corollary} \label{cor03}
Every essentially commuting non-scalar operator band $\pS\subseteq \pB(\eX)$ has a non-trivial hyperinvariant subspace.
\end{corollary}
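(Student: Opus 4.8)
The plan is to reduce the statement to \Cref{theo05} whenever possible and to dispose of the remaining case by hand. First I would record what the hypotheses give. Essential commutativity of $\pS$ means $[S_1,S_2]\in\pK(\eX)$ for all $S_1,S_2\in\pS$, and each such commutator $[S_1,S_2]=S_1S_2-S_2S_1$ lies in the band algebra $\pA(\pS)$, since $\pA(\pS)$ is closed under products and linear combinations. Thus every commutator of two elements of $\pS$ is a compact operator belonging to $\pA(\pS)\subseteq\overline{\pA(\pS)}$.

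This suggests a dichotomy. If some commutator $[S_1,S_2]$ is non-zero, then $\overline{\pA(\pS)}$ contains a non-zero compact operator, and \Cref{theo05} immediately yields a non-trivial hyperinvariant subspace for $\pS$. It therefore remains to treat the case in which every commutator vanishes, that is, the case where $\pS$ is commutative.

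In the commutative case I would argue directly using the non-scalar hypothesis. Choose a non-scalar operator $E\in\pS$; since $E$ is idempotent, $E\neq 0$ forces $\ker(E)\neq\eX$ and $E\neq I$ forces $\ker(E)\neq\{0\}$ (were $E$ injective, $E(Ex-x)=0$ for all $x$ would give $E=I$). Hence $\ker(E)$ is a non-trivial subspace. For any $T\in(E)'$ and $x\in\ker(E)$ one has $E(Tx)=T(Ex)=0$, so $\ker(E)$ is invariant for the whole commutant $(E)'$. Because $\pS$ is commutative every $S\in\pS$ lies in $(E)'$, and because $E\in\pS$ every $T\in\pS'=\pA(\pS)'$ also lies in $(E)'$; consequently $\ker(E)$ is invariant for both $\pS$ and $\pS'$, that is, it is a non-trivial hyperinvariant subspace for $\pS$.

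The argument is short precisely because \Cref{theo05} carries the analytic weight; the only point demanding care is the commutative case, where one must confirm both the non-triviality of $\ker(E)$ (which rests on $E$ being a non-scalar idempotent) and its hyperinvariance (which rests on the inclusions $\pS\subseteq(E)'$ and $\pS'\subseteq(E)'$). I do not anticipate a genuine obstacle beyond keeping this case split clean and checking that the compact commutator really sits inside $\pA(\pS)$ rather than merely inside $\pB(\eX)$.
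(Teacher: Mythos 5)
Your proposal is correct and follows essentially the same route as the paper: the same dichotomy between the commutative case (where the kernel of a non-scalar idempotent $E\in\pS$ is hyperinvariant, using $\pS\subseteq\pS'\subseteq(E)'$) and the non-commutative case (where a non-zero compact commutator $EF-FE\in\pA(\pS)$ lets \Cref{theo05} apply). Your write-up merely spells out details the paper leaves implicit, such as the non-triviality of $\ker(E)$ and the membership of the commutator in $\pA(\pS)$.
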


\begin{proof}
If $\pS$ is commutative, then $\pS\subseteq \pS'$. In this case the kernel of any non-scalar operator from $\pS$ is invariant for each $S\in \pS'$. If $\pS$ is not commutative, then there exist idempotents $E,F\in \pS$ with a non-zero compact commutator $EF-FE\in \pA(\pS)$. The assertion follows, by \Cref{theo05}.
\end{proof}

Since every essentially commuting band of operators has an invariant subspace, an application of the Triangularization lemma (see \cite[Lemma 7.1.11]{RR}) immediately implies the following result.

\begin{corollary} \label{cor04}
Every essentially commuting band of operators on a Banach space is triangularizable.
\end{corollary}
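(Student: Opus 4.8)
The plan is to invoke the Triangularization Lemma \cite[Lemma 7.1.11]{RR}, which reduces triangularizability of a collection to two structural conditions: that the defining property is inherited by the operators induced on a quotient $\eN/\eM$ for every pair $\eM\subseteq\eN$ of invariant subspaces, and that every collection enjoying the property and acting on a space of dimension greater than one is reducible. I would take the relevant property to be ``being an essentially commuting operator band.''

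First I would verify that this property passes to quotients. Let $\pS$ be an essentially commuting operator band and let $\eM\subseteq\eN$ be closed $\pS$-invariant subspaces. Writing $\hat S$ for the operator induced by $S$ on $\eN/\eM$, the assignment $S\mapsto\hat S$ is multiplicative and linear on the operators leaving $\eM$ and $\eN$ invariant, so $\hat S^2=\widehat{S^2}=\hat S$ and $\widehat{S_1S_2}=\hat S_1\hat S_2$; thus the induced family $\hat\pS$ is again a semigroup of idempotents. Moreover $[\hat S_1,\hat S_2]=\widehat{[S_1,S_2]}$, and since the restriction of a compact operator to a closed invariant subspace and its passage to the quotient by a closed invariant subspace are again compact, every induced commutator is compact. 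Hence $\hat\pS$ is again an essentially commuting operator band, which establishes the quotient condition.

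Next I would check reducibility on spaces of dimension at least two. Given an essentially commuting operator band on such a space, there are two cases. If it contains a non-scalar operator, then it is reducible by \Cref{cor03}, because a hyperinvariant subspace is in particular invariant. If every operator in the band is scalar, then each such operator, being idempotent, equals $0$ or $I$, and any one-dimensional subspace of the ambient space (which has dimension at least two) is invariant for all of them, so the band is again reducible. Either way the reducibility hypothesis holds.

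With both hypotheses in place, the Triangularization Lemma yields that $\pS$ itself is triangularizable. The only step that genuinely requires care is the inheritance of compactness of the commutators under passage to quotients by invariant subspaces; since this is a standard fact and everything else is formal, I do not anticipate a real obstacle.
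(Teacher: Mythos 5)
Your proof is correct and follows essentially the same route as the paper, which likewise deduces the result by applying the Triangularization Lemma \cite[Lemma 7.1.11]{RR} to the reducibility statement of \Cref{cor03}. You merely make explicit two details the paper leaves implicit --- the inheritance of the property ``essentially commuting band'' by quotients of invariant subspaces, and the degenerate case of a band consisting only of the scalar idempotents $0$ and $I$ --- both of which you handle correctly.
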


{\it Acknowledgments.}
The paper is a part of the project {\em Distinguished subspaces of a linear operator} and the work of the first author
was partially supported by the Slovenian Research Agency through the research program P2-0268.
The second author acknowledges financial support from the Slovenian Research Agency, Grants No. P1-0222, J1-2453 and J1-2454.

%

%
\end{document}